\theoremstyle{plain}
\newtheorem{theorem}{Theorem}[section]
\newtheorem{proposition}[theorem]{Proposition}
\newtheorem{lemma}[theorem]{Lemma}
\theoremstyle{definition}
\newtheorem{definition}[theorem]{Definition}
\theoremstyle{remark}
\newtheorem{remark}[theorem]{Remark}
\numberwithin{equation}{section}
\title{The Snapshot Problem for Wave Equations on Homogeneous Trees}
\author{Fulton Gonzalez}
\address{Department of Mathematics,
Tufts University, 
Medford, MA 02155, USA}
\email{fulton.gonzalez@tufts.edu}
\author{Katie Hallett}
\address{Department of Mathematics, St. Lawrence University,
Canton, NY 13617}
\email{khall22@stlawu.edu}
\author{Adelaide Nebeker}
\address{Department of Mathematics, North Carolina State University at Raleigh, Raleigh, NC 27695}
\email{anebeke@ncsu.edu}
\author{Andrew Sailstad}
\address{School of Mathematics, University of Minnesota, Minneapolis, MN 55455}
\email{sails008@umn.edu}
\date{\today}
\subjclass[2020]{Primary: 43A85, Secondary: 39A14}
\begin{document}

\maketitle

\begin{abstract}
By definition, a \emph{wave} on a homogeneous tree $\mathfrak X$ is a solution to the discrete wave equation on $\mathfrak{X}$; that is, a family $\{f_k\}_{k\in\mathbb Z}$ of complex-valued functions on $\mathfrak X$ satisfying the partial difference equation $\mu_1 f_k=(f_{k+1}+f_{k-1})/2$ for all $k$, where $\mu_1$ is the mean value operator on $\mathfrak X$ of radius $1$.  The function $f_k$ is called the \emph{snapshot} of the wave at time $k$.  For $k\geq 2$, we will show that there exist infinitely many waves having given snapshots at times $0$ and $k$, but that all such waves have the same snapshots at times which are multiples of $k$.  For integers $0<k<\ell$, we then consider necessary and sufficient conditions for the existence and uniqueness of a wave with given snapshots at times $0,\,k,\,\ell$. 
\end{abstract}


\section{Introduction}
Fix an integer $q\geq 1$.  A \emph{homogeneous tree $\mathfrak{X}$ of degree $q+1$} is a tree (that is, a connected undirected graph without cycles) in which every vertex has exactly $q+1$ edges.  Such a tree clearly has infinitely many vertices and edges, and its homogeneity allows us to study many structures on the tree from the point of view of harmonic analysis and group theory.  Indeed, there have been numerous recent works on the subject, for example \cite{FTN1991,GPT2003,PT2021}.  These papers in particular highlight the interesting similarities between homogeneous trees on the one hand and Euclidean spaces as well as rank one noncompact Riemannian symmetric spaces on the other, from the  point of view of geometric and harmonic analysis.

   Let $V$ be the set of all vertices of $\mathfrak{X}$. In this paper, we will only consider functions on the vertices of $\mathfrak{X}$, so  for convenience we refer to complex-valued functions on $V$   as \textit{functions on }$\mathfrak{X}$. The vector space of all such functions $f$ on $\mathfrak{X}$ will be denoted by $\mathcal{F}\mathfrak{(X)}$. 
    
    Since $\mathfrak{X}$ is a tree, the shortest (undirected) path between any two vertices $v$ and $w$ in $\mathfrak{X}$ is unique. The \emph{distance} between $v$ and $w$, denoted by $d(v,w)$, is the number of edges in this shortest path. For a given vertex $v$, the \textit{sphere}  of radius $k$ centered at $v$ is the set of all vertices $w$ with $d(w,v)=k$, and will be denoted by $S_k(v)$.  The set of all vertices $u$ with $d(u,v)\leq k$ will be called the \textit{ball} of radius $k$ centered at $v$, and denoted by $B_k(v)$. It is easy to see that $S_k(v)$ contains $(q+1)q^{k-1}$ vertices and $B_k(v)$ contains $(q^{k+1}+q^{k-1}-1)/(q-1)$ vertices. With these notations, we define the radius $k$ mean value operator, whose study is a focus of this paper.

  \begin{definition}\label{def:MVO} For any  integer $k\geq 0$, the radius $k$ \emph{spherical mean value operator $\mu_k:\mathcal{F}\mathfrak{(X)}\to \mathcal{F}\mathfrak{(X)}$} is defined by 

\begin{equation}\label{eq:MVO}
\mu_kf(v)=\frac{1}{(q+1)q^{k-1}}\sum_{w\in S_k(v)}f(w), \qquad f\in\mathcal F(X),\;v\in\mathfrak X.
\end{equation}

\end{definition}

The spherical mean value operator $\mu_k$ is the analogue of the fixed radius mean value operator $M_rf$ on Euclidean spaces (and indeed Riemannian manifolds), for which there is a vast literature.  (See F. 
 John's book \cite{JohnPlaneWaves} as a starting reference.)

The \emph{Laplacian} of any function $f\in\mathcal F(\mathfrak X)$ is the function $\Delta f$ on $\mathfrak X$ given by
\begin{equation}\label{E:laplacian-def}
    \Delta f=\mu_1 f-f.
\end{equation}

The wave equation on $\mathfrak X$ is the discrete analogue of the wave equation on $\mathbb R^n$.
To introduce it, we will assume discrete time dependence for our functions on $\mathfrak{X}$ by considering functions $f:V \times \mathbb{Z} \to \mathbb{C}$. For convenience, for any fixed $k\in\mathbb Z$, we denote the function $f(k,\cdot)$ on $\mathfrak X$ by $f_k$, and we call $f_k$ \emph{the snapshot of} $f$ \textit{at time} $k$. The second partial derivative of $f$ with respect to discrete time is then given by the second symmetric difference 
\begin{equation}\label{E:second-diff}
      \partial^2 _k f \colon = \frac{f_{k+1} + f_{k-1}}{2} - f_k.
\end{equation}
The \emph{wave equation} on $\mathfrak X$ is the second order partial difference equation on functions $f$ on $\mathfrak X\times\mathbb Z$ given by
\begin{equation}\label{E:wave-eqn}
    \Delta f_k = \partial^2_k f.
\end{equation}
Any function $f$ on $\mathfrak X\times\mathbb Z$ which satisfies the wave equation will be called a \emph{wave} on $\mathfrak X$.  It is clear from \eqref{E:laplacian-def} and \eqref{E:second-diff} that the wave equation is equivalent to the equation
    \begin{equation}\label{E:waveeq}
    \mu_1 f_k = \frac{f_{k+1} + f_{k-1}}2,
    \end{equation}
or to  
\begin{equation}\label{eq:waveq2}
    f_{k+1}=2\,\mu_1 f_k - f_{k-1}.
\end{equation}

    It can be seen from the above that if any consecutive snapshots $f_m$ and $f_{m+1}$ are known, a wave $u$ can be determined recursively at all times $k\in \mathbb{Z}$. For wave equations, we will often consider initial conditions on our wave in terms of known consecutive snapshots, particularly at times $k=0$ and $k=1$, which we refer to as \textit{Cauchy data}.  

    It is also clear by induction from \eqref{eq:waveq2} that waves on $\mathfrak X$ propagate with unit speed.  That is to say, if $f$ is a wave on $\mathfrak X$, then for each integer $k$ and each vertex $v$, $f_k(v)$ depends only the values of $f_0$ and $f_1$ at vertices $w$ whose distance from $v$ is at most $|k|$.  However, it is easy to produce examples of waves on $\mathfrak X$ where Huygens' principle is shown not to hold.

    Wave equations were considered by Cohen and Pagliacci in 1994 \cite{CohenPagliacci94}, who (by using Radon transforms among other methods) obtained closed form solutions in terms of the Cauchy data.  Continuous time wave equations were considered by Medolla and Setti in 1999 \cite{MedollaSetti99}, who obtained solutions in terms of convolutions with suitable wave kernels.  (Unfortunately, solutions to the latter type of wave equation are not discretely ``hyperbolic'' and do not propagate at finite speed.)

    Let us now introduce the two- and three-snapshot problems for the wave equation.  Fix an integer $k\geq 2$, and suppose that $\{f_j\}$ is a wave on $\mathfrak X$.  Instead of the Cauchy data $f_0$ and $f_1$,  suppose that we are given the snapshots $f_0$ and $f_k$.  How much of the wave $\{f_j\}$ can we recover?  Moreover, if $f$ and $g$ are arbitrary functions on $\mathfrak X$, is there a wave $\{f_j\}$ such that $f_0=f$ and $f_k=g$?

    Next fix integers $k$ and $\ell$ with $2\leq k<\ell$.  Under what conditions on $k$ and $\ell$ is a wave $\{f_j\}$ uniquely determined by the snapshots $f_0,\;f_k$, and $f_\ell$?  And given three functions $f,\;g$, and $h$ on $\mathfrak X$, is there a wave $\{f_j\}$ such that $f_0=f$, $f_k=g$, and $f_\ell=h$?  Or do we need to impose some consistency conditions on $f,\,g$, and $h$ for such a wave to exist?

    The present paper, which was motivated by the recent paper \cite{CGKW2024}, provides an answer to the above questions.

The material in the paper is organized as follows. In Section \hyperref[sec:MVO]{2} we introduce an formula for the iterated mean value $\mu_k\,\mu_\ell$,  and prove that any polynomial in $\mu_1$ is a surjective linear operator on $\mathcal F(\mathfrak X)$. In Section \hyperref[sec:GenSol]{3}, we provide a closed-form solution to the wave equation on $\mathfrak{X}$ in terms of the Cauchy data $f_0$ and $f_1$ using Chebyshev polynomials, reformulating the earlier general solution offered by Cohen and Pagliacci \cite{CohenPagliacci94} in a way that is useful for the study of snapshot problems. Section \hyperref[sec:2Snap]{4} considers the uniqueness and existence question for a wave having  snapshots at two given times, which we can take to be $t=0,\;k$. Uniqueness is shown to be determined only at snapshots with times a multiple of $k$, and existence is obtained as straightforward consequence of our closed form solution to the wave equation. Section \hyperref[sec:3Snap]{5} explores the three-snapshot problem, with three distinct snapshots at times taken to be $t=0.\;k,\;\ell$. Here the uniqueness of the wave is found only when $k$ and $\ell$ are relatively prime. As for existence, it is shown that three functions on $\mathfrak{X}$ can be specific snapshots of a wave only when they satisfy certain consistency conditions. Whether or not these conditions are sufficient for a wave with the desired snapshots to exist leads to two conditions, one for relatively prime $m$ and $\ell$ and a stronger condition for other cases. Section \hyperref[sec:EPD]{6} provides a characterization of the range of the mean value operator $\mu_k$ in terms of the discrete Euler-Poisson-Darboux (EPD) equation, and provides an expression for $\mu_k$ in terms of Dickson polynomials of $\mu_1$ that is consistent with that obtained by Picardello and Cohen in \cite{MPicardelloCohen1988}.  Section \hyperref[sec:2circle]{7} completes the work of Picardello and Cohen on the Two-Circle Problem by asking the corresponding existence question. Picardello and Cohen provide conditions under which uniqueness is achieved, and these conditions are found to be equivalent to those required for existence.


\section{Basic Properties of the mean value operator}\label{sec:MVO}

The mean value operator $\mu_1$ features prominently in the wave equation, and indeed in all of harmonic analysis, on $\mathfrak{X}$. Moreover, one can quickly deduce from \eqref{E:waveeq} that any closed form solution to the wave equation must involve iterated application of $\mu_1$ to functions on $\mathfrak{X}$. Thus it will be important to consider polynomials in $\mu_1$; that is, linear combinations of powers of $\mu_1$. For a start, it will be useful for us to introduce the following iterated mean value formula on $\mathfrak{X}$, which is the discrete analogue of the iterated mean value formula for $\mathbb R^n$ given by John's formula 4.9c in \cite{JohnPlaneWaves}.  (See also Ch.~VI, Sec.~3  in \cite{Helgason2011} for the formula for the unit sphere $S^{n-1}$, and the generalization in \cite{Rouviere2013} for rank one symmetric spaces.)

\begin{proposition}\label{prop:IMV}
Fix integers $k, \ell \geq 0$, and let $m=\min(k,\ell)$. Then 
\begin{equation}\label{eq:IMV}
    \mu_k\mu_\ell  = 
\frac{q}{q+1}\mu_{k+\ell} +\sum_{i=1}^{m-1} \left(\frac{q-1}{q^{i}(q+1)}\mu_{k+\ell-2i}\right)+\frac{\mu_{|k-\ell|}}{q^{m-1}(q+1)}.
\end{equation}

\end{proposition}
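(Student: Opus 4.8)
The plan is to prove the iterated mean value formula \eqref{eq:IMV} by a direct combinatorial counting argument on the tree. Fix a vertex $v$ and a function $f$. Expanding the definitions, $\mu_k\mu_\ell f(v)$ is a weighted sum of values $f(w)$ over all vertices $w$, where the weight of $w$ is proportional to the number of vertices $u$ with $d(v,u)=k$ and $d(u,w)=\ell$. Since the tree is homogeneous, this count depends only on $d(v,w)$, so I would write $\mu_k\mu_\ell f(v)=\sum_{j} c_j\,\mu_j f(v)$ and compute the coefficients $c_j$ by a careful case analysis of the geodesics involved. The key geometric fact is that for vertices $v,w$ at distance $j$, any vertex $u$ lies on a unique configuration determined by its ``projection'' onto the geodesic $[v,w]$: writing the path from $u$ to the geodesic as meeting it at a point $p$, one has $d(v,u)=d(v,p)+d(p,u)$ and $d(w,u)=d(w,p)+d(p,u)$, so the constraints $d(v,u)=k$, $d(w,u)=\ell$ force $d(v,p)-d(w,p)=k-\ell$ together with $d(v,p)+d(w,p)+2d(p,u)=k+\ell$. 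Counting the number of such $u$ for each admissible $p$ on $[v,w]$ and each admissible branch length is then a finite geometric-series computation.

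Concretely, I would organize the argument as follows. First, assume without loss of generality $k\le \ell$ so $m=k$. The distance $j=d(v,w)$ must satisfy $|k-\ell|\le j\le k+\ell$ and $j\equiv k+\ell\pmod 2$; for each such $j$ I count $N_j:=\#\{u: d(v,u)=k,\ d(u,w)=\ell\}$. For $j=k+\ell$ there is exactly one such $u$ (the midpoint-type configuration), giving the expected leading contribution. For $|k-\ell|<j<k+\ell$ with the right parity, the projection point $p$ of $u$ onto $[v,w]$ is forced, and $u$ lies at the end of a branch of prescribed length $t=(k+\ell-j)/2\ge 1$ hanging off $p$; the number of such branches is $(q-1)q^{t-1}$ if $p$ is an interior vertex of the geodesic (it has $q-1$ directions off the path) — this is where the factor $(q-1)$ and the powers of $q$ enter. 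The extreme case $j=|k-\ell|$ is special because then $p$ coincides with one of the endpoints $v$ or $w$, which has $q$ available directions rather than $q-1$, accounting for the different shape of the last term $\mu_{|k-\ell|}/(q^{m-1}(q+1))$. Finally I convert the raw counts $N_j$ into the coefficient of $\mu_j f(v)$ by dividing by $|S_k(v)|=(q+1)q^{k-1}$ and multiplying by $|S_j(v)|$, since $\mu_j f(v)$ averages over the $(q+1)q^{j-1}$ vertices of $S_j(v)$ and by homogeneity each contributes equally.

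An alternative, cleaner route I would keep in reserve is an inductive proof in $m=\min(k,\ell)$: the base case $m=0$ is trivial ($\mu_0=\mathrm{id}$), and $m=1$ reduces to a known nearest-neighbor identity $\mu_1\mu_\ell=\frac{q}{q+1}\mu_{\ell+1}+\frac{1}{q+1}\mu_{\ell-1}$ for $\ell\ge 1$, which is itself the simplest case of the counting above. For the inductive step one would express $\mu_k\mu_\ell$ using $\mu_1\mu_{k-1}\cdot$ via $\mu_k=\frac{q+1}{q}\mu_1\mu_{k-1}-\frac{1}{q}\mu_{k-2}$ (valid for $k\ge 2$, again from the nearest-neighbor identity), apply the induction hypothesis to $\mu_{k-1}\mu_\ell$ and $\mu_{k-2}\mu_\ell$, and then simplify the resulting telescoping sums. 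I expect the main obstacle to be bookkeeping: in the inductive approach, correctly tracking the boundary terms when $k-\ell$ changes sign or when indices collide (e.g.\ $\mu_{k+\ell-2i}$ landing on $\mu_{|k-\ell|}$), and in the direct approach, the careful separation of the ``interior projection'' and ``endpoint projection'' cases that produces the asymmetry between the generic coefficient $\frac{q-1}{q^i(q+1)}$ and the final coefficient $\frac{1}{q^{m-1}(q+1)}$. Either way the verification that the coefficients sum to $1$ (forced since $\mu_k\mu_\ell$ preserves constants) provides a useful consistency check on the computation.
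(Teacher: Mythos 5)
Your direct counting argument is essentially the paper's own proof: the paper likewise decomposes $\bigcup_{w\in S_k(v)}S_\ell(w)$ into spheres $S_{k+\ell-2i}(v)$ and counts, for each vertex at distance $k+\ell-2i$ from $v$, the $(q-1)q^{i-1}$ (interior case) versus $q^{m}$-type (endpoint case) incidences, exactly as in your projection-onto-the-geodesic analysis. The only blemish is the sentence describing the normalization, where you should divide by both $\lvert S_k(v)\rvert$ and $\lvert S_\ell(u)\rvert=(q+1)q^{\ell-1}$ before multiplying by $\lvert S_j(v)\rvert$ (your earlier phrase ``proportional to'' has it right), after which your counts reproduce the stated coefficients.
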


The proof is a straightforward combinatorial argument, involving a count of the number of vertices at distance $\ell$ from each vertex in the sphere $S_k(v)$.  The key here is to note that $\bigcup_{w\in S_k(v)} S_\ell(w)=\bigcup_{0\leq i\leq \ell} S_{k+\ell-2i}(v)$, and that if $s\in S_{k+\ell-2i} (v)$ and $1\leq i\leq m-1$, then $s\in S_\ell(w)$ for exactly $(q-1)q^{i-1}$ vertices $w\in S_\ell(v)$.

It follows immediately from Proposition \ref{prop:IMV} that $\mu_k\mu_\ell=\mu_\ell\mu_k$. (This can also be seen by noting that $\mu_k$ is a convolution operator with a radial kernel; however we will not need to use convolutions in this paper.) If we put $\ell=1$ in \eqref{eq:IMV} then we obtain
    \begin{equation}\label{eq:mu1muk}
        \mu_1\mu_k=\frac{q}{q+1}\mu_{k+1}+\frac{1}{q+1}\mu_{k-1}.
    \end{equation}

Fix an integer $k\geq 0$, Then from \eqref{eq:mu1muk}  one can easily infer by induction on $k$ that $$\mu_k = \sum_{i=0}^{k}r_i\mu_1^i,$$
for some rational constants $r_i$ with $\sum r_i =1$ and $r_k\neq 0$.  (The rational numbers numbers $r_i$ will be explicitly given in Theorem \ref{thm:dicksonmuk}.) Likewise, another induction also using \eqref{eq:mu1muk} shows that
\begin{equation}\label{eq:mu1k}
\mu_1^k=\sum_{i=0}^k t_i\,\mu_i,
\end{equation}
where each $t_i$ is nonnegative and rational, and $\sum_{i=0}^k t_i=1$.
(See \cite{MPicardelloCohen1988} and Sec.~\ref{sec:EPD} below.)  With this relation, we can prove that the mean value operator $\mu_k$ is surjective on $\mathcal F(\mathfrak X)$.  In fact, any nonzero polynomial in $\mu_1$ is a surjective linear operator of $\mathcal F(\mathfrak X)$.  This will be essential in our later characterization of the two and three snapshot problems.

\begin{theorem}\label{thm:muonto}
Let $k$ be a nonnegative integer and consider any linear operator on $\mathcal F(\mathfrak X)$ of the form  $Q=\sum_{i=0}^{k}c_i\mu_i$, where $c_i\in \mathbb{C}$ and $c_k\neq 0$. Then $Q$ is surjective.
\end{theorem}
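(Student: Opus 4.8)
The plan is to reduce to the single-variable situation and then solve a linear recurrence on the tree vertex by vertex. First I would invoke the fact, recorded just before the theorem, that each $\mu_i$ is a polynomial in $\mu_1$ of degree exactly $i$; substituting these expressions rewrites $Q=\sum_{i=0}^{k}c_i\mu_i$ as a polynomial $p(\mu_1)$ of degree exactly $k$, its top coefficient being $c_k$ times the nonzero leading coefficient of $\mu_k$ as a polynomial in $\mu_1$. If $k=0$ then $Q$ is a nonzero scalar multiple of the identity and there is nothing to prove, so assume $k\geq 1$. Factoring $p$ over $\mathbb C$ gives $Q=a\prod_{j=1}^{k}(\mu_1-\lambda_jI)$ with $a\neq 0$ and $\lambda_1,\dots,\lambda_k\in\mathbb C$. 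Since a composition of surjective linear maps is surjective and multiplication by the nonzero scalar $a$ is bijective, it suffices to prove the following claim: \emph{for every $\lambda\in\mathbb C$, the operator $\mu_1-\lambda I$ is surjective on $\mathcal F(\mathfrak X)$.}

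To establish the claim, fix $g\in\mathcal F(\mathfrak X)$ and seek $f$ with $\mu_1 f-\lambda f=g$; by \eqref{eq:MVO} this is the system
\[
\sum_{w\in S_1(v)}f(w)=(q+1)\bigl(g(v)+\lambda f(v)\bigr),\qquad v\in\mathfrak X.
\]
I would pick a base vertex $o$, so that every $v\neq o$ has a unique parent and $q$ children while $o$ has $q+1$ children, and then define $f$ by induction on $d(o,v)$: set $f(o)$ to be anything (say $0$), and once $f(v)$ has been determined, observe that the equation above at $v$ prescribes only the value of the sum $\sum_{c}f(c)$ over the children $c$ of $v$ — the single parent term, when present, is already known — and this prescribed value can always be realized, e.g.\ by assigning it to one child of $v$ and $0$ to the remaining children. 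This defines $f$ at every vertex and arranges that the equation holds at every vertex, so $f$ is a preimage of $g$ under $\mu_1-\lambda I$.

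I do not expect a genuine obstacle; the argument is really organized bookkeeping, and the points needing care are that each equation of the system is consumed exactly once (the equation at $v$ is satisfied precisely when the values at the children of $v$ are chosen) and that the root, having $q+1$ rather than $q$ children, still has a child so the inductive step goes through there as well. The degenerate case $q=1$, in which $\mathfrak X$ is the bi-infinite line and each non-root vertex has a single child, is covered too: the recurrence then propagates deterministically in both directions from the value of $f$ at $o$ and the values at the two neighbors of $o$.
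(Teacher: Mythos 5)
Your proof is correct, but it follows a genuinely different route from the paper. The paper proves surjectivity of $Q=\sum_{i=0}^k c_i\mu_i$ directly: starting from a reference vertex $o$ it defines $f$ by induction on $d(v,o)$, using the freedom to choose the values of $f$ on the $q^k$ vertices at distance $k$ from $v$ lying outside the already-determined ball to force $\mu_k f(v)$, and hence $Qf(v)$, to have the required value. You instead first rewrite $Q$ as a degree-$k$ polynomial $p(\mu_1)$ (legitimate, by the identity $\mu_i=\sum_j r_j\mu_1^j$ with nonzero top coefficient recorded before the theorem), factor $p$ over $\mathbb C$ into commuting linear factors, and reduce to showing each $\mu_1-\lambda I$ is surjective, which you do by a rooted parent/children recursion in which the equation at $v$ is satisfied by choosing the values at the children of $v$; the bookkeeping there is sound (each value is assigned exactly once, each equation is consumed exactly once, and the $q=1$ case is handled). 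What your approach buys is that the combinatorial construction only has to be done for the simplest operator $\mu_1-\lambda I$, at the price of invoking the $\mu_i$-to-$\mu_1$ conversion and factorization over an algebraically closed field; the paper's argument is more self-contained at the combinatorial level, works with arbitrary coefficients $c_i$ without any polynomial algebra, and makes the infinite-dimensional freedom noted in Remark \ref{rmk:infmany} immediately visible. Your construction exhibits the same freedom (the arbitrary choices at $o$ and among siblings), so it supports that remark equally well.
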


\begin{proof}
    The theorem being trivial for $k=0$, we can assume that $k$ is positive.
    
     Let $g\in \mathcal F(\mathfrak{X})$. We will construct a function $f$ on $\mathfrak X$ such that $Qf=g$, starting from a reference vertex $o$, by defining $f(v)$ inductively on $d(v,o)=m$. The base case is $d(o,v)=0$, so that $v=o$. Since $c_k\neq 0$ we can choose values of $f$ such that $\mu_kf(o)=g(o)/c_k$ and $\mu_if(o)=0$ for $1\leq i \leq k-1$. (In fact we can put $f(v)=g(o)/c_k$ for all vertices $v$ at distance $k$ from $o$.) Then clearly $Qf(o)=\sum_{i=0}^{k}c_i\mu_if(o)=g(o)$, and for all vertices $v\in B_k(o)$, the ball of radius $k$ centered at $o$, the value of $f(v)$ has been defined.
    
    Now suppose that we have constructed $f$ to satisfy $Qf(v)=g(v)$ for all $v$ with $d(v,o)\leq m$. Then  $f(w)$ has been defined for all  $w\in B_{k+m}(o)$. It follows that for all vertices $v$ with $d(v,o)=m+1,$ the value of $Q'f(v)=\sum_{j=0}^{k-1}c_j\mu_jf(v)$ has been determined, so that $Q'f(v)=c$ for some $c\in \mathbb{C}$. Now it is clear that for each such $v$ there are $q^k$ vertices $s$ such that $d(v,s)=k$ and $d(o,s)=k+m+1$. Since $s\notin B_{k+m}(o)$, we are free to choose the values $f(s)$ such that $\mu_kf(v)=(g(v)-c)/c_k$. It follows that $Qf(v)=c_k\mu_kf(v)+Q'f(v)=g(v)$ as desired. We have thus constructed $f$ to satisfy $f(v)=g(v)$ for all $v$ with $d(v,o)\leq m+1$, finishing the induction.

\end{proof}
\begin{remark}\label{rmk:infmany}
The proof above shows that for any function $g\in\mathcal F(\mathfrak X)$, there are infinitely many functions $f\in\mathcal F(\mathfrak X)$ such that $Qf=g$.  In fact the kernel of $Q$ is infinite-dimensional.
\end{remark}
\begin{remark}\label{rmk:rn-surjectivity}  In $\mathbb R^n$ the problem of surjectivity of convolutions with compactly supported distributions is a much harder problem, which was solved by Ehrenpreis in \cite{Ehrenpreis1960}.
\end{remark}


\section{Closed form solutions to the wave equation}\label{sec:GenSol}

We recall now that given Cauchy data $f_0$ and $f_1$, the wave equation \ref{E:waveeq} offers a recursive method of determining the wave $\{f_k\}$. It is of course desirable to obtain an explicit expression for $f_k$ in terms of  $f_0$ and $f_1$. 
Such an expression was found by Pagliacci in \cite{Pagliacci93}, who provided  an expression of $f_k$ as a linear combination of spherical mean values of the Cauchy data, and whose coefficients were determined through recursion. Below we present an expression that uses the Chebyshev polynomials of the second kind, denoted by $U_n(x)$ (\cite{ChebyshevPolys}).  These are defined by $U_n(\cos\theta)=(\sin(n+1)\theta)/\sin\theta$, or recursively by $U_0(x)=1$, $U_1(x)=2x$, and $U_k(x)=2xU_{k-1}(x)-U_{k-2}(x)$ for $k\geq 2$. The Chebyshev polynomials with negative indices are defined by $U_{-k-1}=-U_{k-1}$ for $k\geq 0$. (Note that this implies that $U_{-1}(x)=0$ and that the recursion formula holds for all indices $k$, regardless of sign.)  Let us now consider the linear operators $U_n(\mu_1)$ on $\mathcal F(\mathfrak X)$, which allow us to solve the Cauchy problem for the wave equation on $\mathfrak X$.

\begin{theorem}\label{thm:genform}
   The wave equation \eqref{E:wave-eqn} with Cauchy data $f_0$ and $f_1$ on $\mathfrak X$ has the unique solution
    \begin{equation}\label{eq:closedform soln}
    f_k = U_{k-1}(\mu_1)f_1-U_{k-2}(\mu_1)f_0, \qquad\qquad k\in\mathbb Z.
    \end{equation}
\end{theorem}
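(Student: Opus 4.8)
The plan is to verify directly that the family $\{f_k\}$ defined by \eqref{eq:closedform soln} satisfies the recursion \eqref{eq:waveq2}, and to check the initial conditions, appealing to the fact that the wave is determined uniquely by its Cauchy data. First I would check the base cases: at $k=0$ we get $f_0 = U_{-1}(\mu_1)f_1 - U_{-2}(\mu_1)f_0 = 0\cdot f_1 - (-U_0(\mu_1))f_0 = f_0$, using $U_{-1}=0$ and $U_{-2}=-U_0=-1$; at $k=1$ we get $f_1 = U_0(\mu_1)f_1 - U_{-1}(\mu_1)f_0 = f_1 - 0 = f_1$. So both initial snapshots come out right.

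Next I would check that \eqref{eq:closedform soln} satisfies the wave equation in the form \eqref{eq:waveq2}, namely $f_{k+1} = 2\mu_1 f_k - f_{k-1}$. Substituting the proposed formula on all three terms, the right-hand side becomes
\begin{equation*}
2\mu_1\bigl(U_{k-1}(\mu_1)f_1 - U_{k-2}(\mu_1)f_0\bigr) - \bigl(U_{k-2}(\mu_1)f_1 - U_{k-3}(\mu_1)f_0\bigr).
\end{equation*}
Collecting the coefficient of $f_1$ gives $\bigl(2\mu_1 U_{k-1}(\mu_1) - U_{k-2}(\mu_1)\bigr)$, which by the Chebyshev recursion $U_k(x) = 2xU_{k-1}(x) - U_{k-2}(x)$ — valid for all integer indices by the stated extension to negative indices — equals $U_k(\mu_1)$. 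Likewise the coefficient of $f_0$ is $-\bigl(2\mu_1 U_{k-2}(\mu_1) - U_{k-3}(\mu_1)\bigr) = -U_{k-1}(\mu_1)$. Hence the right-hand side equals $U_k(\mu_1)f_1 - U_{k-1}(\mu_1)f_0$, which is exactly $f_{k+1}$ according to \eqref{eq:closedform soln}. Since the recursion \eqref{eq:waveq2} propagates in both directions of $k$, and the formula is a polynomial identity in $\mu_1$ valid for all $k\in\mathbb Z$, this handles all times simultaneously.

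Finally, uniqueness: since $f_0$ and $f_1$ as Cauchy data determine the wave recursively via \eqref{eq:waveq2} (running forward for $k\geq 2$ and backward for $k\leq -1$), any two waves with the same Cauchy data coincide everywhere, so the solution just exhibited is the unique one.

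There is no serious obstacle here; the only point requiring a little care is bookkeeping with the negatively-indexed Chebyshev polynomials, i.e.\ confirming that the recursion $U_k = 2xU_{k-1} - U_{k-2}$ and the evaluations $U_{-1}=0$, $U_{-2}=-1$ are consistent with the convention $U_{-k-1} = -U_{k-1}$, so that the verification of the base cases and of the step $k\mapsto k+1$ goes through uniformly for every integer $k$ without a separate argument near $k=0,1$.
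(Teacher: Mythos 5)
Your argument is correct and is essentially the paper's own proof: verify the base cases $k=0,1$, check that the formula satisfies \eqref{eq:waveq2} via the Chebyshev recursion (valid for all integer indices under the stated sign convention), and conclude uniqueness from the fact that two consecutive snapshots determine the wave. You simply spell out the bookkeeping that the paper leaves implicit.
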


\begin{proof}
The recursion formula for $U_k$ easily shows that if $f_k$ is defined by \eqref{eq:closedform soln} for $k\neq 0,\,1$, then the two-sided sequence $\{f_k\}$ solves the wave equation \eqref{E:waveeq} with Cauchy data $f_0$ and $f_1$.

For the uniqueness, it has already been observed after \eqref{eq:waveq2} that any two consecutive snapshots of a wave determine the wave uniquely. 
\end{proof}

The closed form expression \eqref{eq:closedform soln} is likely well known, although we did not find the formula in Pagliacci's papers \cite{Pagliacci93,CohenPagliacci94}, nor in a review of the literature. (We did discover an expression for the solution of a related but slightly different wave equation in \cite{peterson2025discretewaveequationapplications} which likewise uses Chebyshev polynomials.)  The formula \eqref{eq:closedform soln} will prove to be useful in the sequel, especially as we shall be using certain classical properties of Chebyshev polynomials. 

We will generalize Theorem \ref{thm:genform} in Theorem \ref{thm:gensolEPD} below. 

\section{The Two Snapshot Problem}\label{sec:2Snap}

Our objective in this section is to show that if $k\neq 0$ and if we are given snapshots $f_0$ and $f_k$ of a wave $\{f_j\}$ on $\mathfrak{X}$ we can uniquely determine the snapshots $f_{nk}$ for all integers $n$, and that, in addition, if $k\neq \pm 1$, then there are infinitely many such waves $\{f_j\}$.  To this end, we will first generalize the wave equation \eqref{E:wave-eqn} by obtaining a relation between $f_k$,\; $f_{k+m}$, and $f_{k-m}$. This will then give us a formula for $f_{nk}$ in terms of $f_0$ and $f_k$.

For this, we now recall the Chebyshev polynomials $T_m$ of the first kind.  These are defined by $T_m(\cos\theta)=\cos m\theta$ for $m\in\mathbb Z$, or recursively by $T_0(x)=1,\;T_1(x)=x$, and $T_{m+1}(x)=2x\,T_m(x)-T_{m-1}(x)$. Putting $T_{-m}(x)=T_m(x)$, we now note that these recursion relations then hold for all integers $m$.

\begin{lemma}\label{thm:f+f=T}  Let $\{f_j\}$ be a wave on $\mathfrak{X}$.  Then for all integers $m$ and $k$ we have
        \begin{equation}\label{eq:recurs1}
        f_{m+k} + f_{m-k}  = 2T_k(\mu_1)f_m,\qquad k\in\mathbb Z.
        \end{equation}
\end{lemma}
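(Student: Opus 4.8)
The plan is to reduce \eqref{eq:recurs1} to a classical Chebyshev identity and then invoke the closed-form solution of Theorem \ref{thm:genform}. First I would record the polynomial identity
\[
U_{n+k}(x) + U_{n-k}(x) = 2\,T_k(x)\,U_n(x),
\]
valid for all integers $n$ and $k$. On $[-1,1]$ this is just the product-to-sum formula: writing $x=\cos\theta$, the left side equals $\bigl(\sin((n+k+1)\theta)+\sin((n-k+1)\theta)\bigr)/\sin\theta = 2\cos(k\theta)\,\sin((n+1)\theta)/\sin\theta$, which is the right side. Since both sides are polynomials, the identity holds identically; moreover the sign conventions $U_{-1}=0$, $U_{-j-1}=-U_{j-1}$, $T_{-k}=T_k$ are precisely those making it hold for \emph{all} integer indices, so no case distinction on signs is needed.

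Next I would apply Theorem \ref{thm:genform}, which gives $f_j = U_{j-1}(\mu_1)f_1 - U_{j-2}(\mu_1)f_0$ for every $j\in\mathbb Z$. Substituting $j = m+k$ and $j=m-k$ and adding,
\[
f_{m+k}+f_{m-k} = \bigl(U_{m+k-1}(\mu_1)+U_{m-k-1}(\mu_1)\bigr)f_1 - \bigl(U_{m+k-2}(\mu_1)+U_{m-k-2}(\mu_1)\bigr)f_0.
\]
Applying the displayed identity once with $n=m-1$ and once with $n=m-2$ collapses the right-hand side to $2\,T_k(\mu_1)\bigl(U_{m-1}(\mu_1)f_1 - U_{m-2}(\mu_1)f_0\bigr) = 2\,T_k(\mu_1)f_m$, which is exactly \eqref{eq:recurs1}.

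An alternative that sidesteps Theorem \ref{thm:genform} entirely is a direct induction on $|k|$. Since both sides of \eqref{eq:recurs1} are unchanged under $k\mapsto -k$ (using $T_{-k}=T_k$), it suffices to treat $k\geq 0$. The cases $k=0$ and $k=1$ are immediate from $T_0=1$ and from \eqref{E:waveeq}. For the inductive step, apply \eqref{eq:waveq2} at times $m+k$ and $m-k$ to obtain
\[
f_{m+k+1}+f_{m-k-1} = 2\mu_1\bigl(f_{m+k}+f_{m-k}\bigr) - \bigl(f_{m+k-1}+f_{m-k+1}\bigr),
\]
then insert the inductive hypotheses for $k$ and $k-1$ and use the recursion $2x\,T_k(x)-T_{k-1}(x)=T_{k+1}(x)$ with $x=\mu_1$.

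There is no real obstacle here; the only point needing a little care is the bookkeeping of negative indices — confirming that the stated conventions make the Chebyshev identity and the three-term recursions valid for all integers, positive and negative — and this is routine, and already flagged in the discussion preceding the lemma.
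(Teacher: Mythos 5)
Your proposal is correct, and your primary argument takes a genuinely different route from the paper. The paper proves \eqref{eq:recurs1} by induction on $k\geq 0$ (for all $m$): the cases $k=0,1$ are the trivial case and the wave equation \eqref{E:waveeq}, and the inductive step adds the two instances of \eqref{eq:waveq2} at times $m\pm(k+1)$ and invokes the recursion $T_{k+1}(x)=2xT_k(x)-T_{k-1}(x)$ — this is exactly your ``alternative'' sketch at the end, so that part coincides with the paper. Your main proof instead feeds the closed-form solution of Theorem \ref{thm:genform}, $f_j=U_{j-1}(\mu_1)f_1-U_{j-2}(\mu_1)f_0$ for all $j\in\mathbb Z$, into the classical product-to-sum identity $U_{n+k}(x)+U_{n-k}(x)=2T_k(x)U_n(x)$, applied with $n=m-1$ and $n=m-2$; your verification of that identity (via $x=\cos\theta$, then polynomial identity, with the sign conventions $U_{-j-1}=-U_{j-1}$, $T_{-k}=T_k$ handling negative indices) is sound. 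What your route buys is a one-line, computation-free derivation once the Chebyshev identity is on record, and it makes transparent that the lemma is purely a statement about the Chebyshev functional calculus in $\mu_1$; what the paper's induction buys is self-containedness — it needs only the wave equation and the three-term recursion for $T_k$, not the closed form of Theorem \ref{thm:genform} nor any trigonometric identity. Either argument is acceptable.
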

\begin{proof}
It is enough to prove \eqref{eq:recurs1} by induction on $k\geq 0$ (and for all integers $m$), noting that it is trivial when $k=0$ and is the wave equation \eqref{E:waveeq} when $k=1$. 

Now let $k\geq 1$ and assume that \eqref{eq:recurs1} holds for this $k$. 
 Then \eqref{eq:waveq2} shows that
\begin{align*}
    f_{m+(k+1)} = 2\mu_1f_{m+k} - f_{m+(k-1)}\\
    f_{m-(k+1)} = 2\mu_1f_{m-k} - f_{m-(k-1)}\\
\end{align*}
for all integers $m$.  Adding the two equations and applying the induction hypothesis yields 
\begin{align*}
f_{m+(k+1)}+f_{m-(k+1)}&= 2[2\mu_1T_k(\mu_1)f_m-T_{k-1}(\mu_1)f_m]\\
&=2\,T_{k+1}(\mu_1)\,f_m,
\end{align*}
completing the induction.
\end{proof}

\begin{theorem}\label{thm:constmultform}
    Let $\{f_j\}$ be a wave on $\mathfrak X$. Then for all integers $n$ and $k$ we have
    \begin{equation}\label{eq:fnk}
    f_{nk} = U_{n-1}(T_k(\mu_1))f_k - U_{n-2}(T_k(\mu_1))f_0.
    \end{equation}
\end{theorem}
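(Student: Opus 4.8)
The plan is to reduce this to the closed-form solution of the Cauchy problem, Theorem \ref{thm:genform}, applied not to $\mu_1$ itself but to the operator $A:=T_k(\mu_1)$. Fix $k\in\mathbb Z$, let $\{f_j\}$ be a wave, and set $g_n:=f_{nk}$ for $n\in\mathbb Z$. First I would observe that $\{g_n\}$ satisfies a ``wave equation'' with $\mu_1$ replaced by $A$: substituting $m=nk$ into Lemma \ref{thm:f+f=T} gives
$$g_{n+1}+g_{n-1}=f_{(n+1)k}+f_{(n-1)k}=2T_k(\mu_1)f_{nk}=2Ag_n$$
for every integer $n$, that is, $g_{n+1}=2Ag_n-g_{n-1}$ with ``Cauchy data'' $g_0=f_0$ and $g_1=f_k$.

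Next I would note that the proof of Theorem \ref{thm:genform} used only the three-term recursion $U_j(x)=2xU_{j-1}(x)-U_{j-2}(x)$ (valid for all integer indices under the convention $U_{-j-1}=-U_{j-1}$, so that $U_{-1}=0$) together with the two-sided recursion satisfied by the snapshots; it used nothing about $\mu_1$ beyond the fact that its powers commute. Since the powers of the single operator $A=T_k(\mu_1)$ commute trivially, the identical induction applies: upward for $n\geq 1$ from $g_{n+1}=2Ag_n-g_{n-1}$, downward for $n\leq 0$ from the rearranged recursions, with base cases $n=1$ (giving $g_1=U_0(A)g_1-U_{-1}(A)g_0=g_1$) and $n=0$ (giving $g_0=U_{-1}(A)g_1-U_{-2}(A)g_0=-(-g_0)=g_0$, using $U_{-2}=-U_0=-1$). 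This yields
$$g_n=U_{n-1}(A)\,g_1-U_{n-2}(A)\,g_0,\qquad n\in\mathbb Z,$$
and unwinding $g_n=f_{nk}$, $g_0=f_0$, $g_1=f_k$, $A=T_k(\mu_1)$ gives exactly \eqref{eq:fnk}. Note this covers $k=0$ as well, since Lemma \ref{thm:f+f=T} holds for that value too.

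I do not expect a genuine obstacle here; the only point needing care is the bookkeeping with negative indices and checking that the base cases $n=0,1$ of the induction are consistent with the negative-index conventions for the $U_j$. A cleaner presentation, which I would likely adopt, is to isolate once and for all the purely algebraic fact that any two-sided sequence $\{g_n\}$ in a vector space with $g_{n+1}=2Ag_n-g_{n-1}$ for a fixed linear operator $A$ satisfies $g_n=U_{n-1}(A)g_1-U_{n-2}(A)g_0$; both Theorem \ref{thm:genform} and Theorem \ref{thm:constmultform} are then immediate instances, the latter via the substitution $A=T_k(\mu_1)$ justified by Lemma \ref{thm:f+f=T}.
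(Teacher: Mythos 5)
Your proof is correct and follows essentially the same route as the paper: both rest on Lemma \ref{thm:f+f=T} with $m=nk$ to get the three-term recursion $f_{(n+1)k}=2T_k(\mu_1)f_{nk}-f_{(n-1)k}$ and then run the Chebyshev recursion $U_n=2xU_{n-1}-U_{n-2}$ by induction in $n$. The only cosmetic difference is that you handle negative $n$ by a downward induction using the convention $U_{-j-1}=-U_{j-1}$ (packaged as an abstract fact about sequences satisfying $g_{n+1}=2Ag_n-g_{n-1}$), whereas the paper reduces negative $n$ to the positive case via the reversed wave $g_j=f_{k-j}$.
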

\begin{proof} We first observe that \eqref{eq:fnk} can be easily verified for $n=0$ or $k=0$, the relation for $n=0$ being trivial, and the relation for $k=0$ using the fact that $U_n(1)=n+1$.  For the rest of the proof we can assume that $n\neq 0$ and $k\neq 0$.
    
    We now prove \eqref{eq:fnk} for $n\geq 1$ and for all $k\neq 0$ by induction on $n$. Note that when $n=1$, \eqref{eq:fnk} is trivial and when $n=2$, \eqref{eq:fnk} reduces to \eqref{eq:recurs1} with $m=k$.  

    Next let $n\geq 2$ and assume that the relation \eqref{eq:fnk} holds for $f_{mk}$ with $1\leq m\leq n$.  Then by \eqref{eq:recurs1} with $m=nk$ and the recursion relations for $U_n$, we obtain
    \begin{align*}
        f_{(n+1)k}&=2\,T_k(\mu_1)\,f_{nk} - f_{(n-1)k}\\
        &=2\,T_k(\mu_1)\left(U_{n-1}(T_k(\mu_1))\,f_k-U_{n-2}(T_k(\mu_1))\,f_0\right)\\
        &\qquad\qquad\qquad -U_{n-2}(T_k(\mu_1))\,f_k+U_{n-3}(T_k(\mu_1))\,f_0\\
        &=U_n(T_k(\mu_1))\,f_k-U_{n-1}(T_k(\mu_1))\,f_0,
    \end{align*}
    completing the induction.

    For $n$ negative, we replace $n$ by $-n$ (with $n$ positive), and apply the already-proven part of the relation \eqref{eq:fnk} to the wave $\{g_j\}$, where $g_j=f_{k-j}$.
\end{proof}

\begin{remark}
While Theorem~\ref{thm:constmultform} gives an explicit expression for $f_{nk}$ in terms of $f_0$ and $f_k$, we observe that Lemma~\ref{thm:f+f=T} already allows us to recursively determine a wave at snapshots which are multiples of $k$. First, we note that $f_{-k}$ may be determined in terms of the given snapshots $f_k$ and $f_0$, since $f_{-k} = 2T_k(
\mu_1)f_0 - f_k$. Then, we may determine $f_{k+k}=f_{2k}$ and $f_{-k+-k}=f_{-2k}$ as expressions of now-known terms. Through iteration, we may obtain $f_{\pm 3k}$, $f_{\pm4k}$, and so forth, on to any desired multiple of $k$.  
\end{remark}

With that said, both formulations for obtaining $f_{nk}$  will be useful as we consider the three snapshot problem in Sec.~\ref{sec:3Snap}.   Thus the snapshots $f_{nk}$ are uniquely determined once we are given $f_0$ and $f_k$.

The following result addresses the question of whether a wave exists with two given snapshots at two different times.

\begin{theorem}\label{thm:infsnapshots} Fix an integer $k\geq 2$.  For arbitrary functions $g$ and $h$ on $\mathfrak{X}$, there exist infinitely many waves $\{f_j\}$ on $\mathfrak X$  such that $f_0=g$ and $f_k=h$.
\end{theorem}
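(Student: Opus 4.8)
The plan is to reduce the statement to the surjectivity of a single polynomial operator in $\mu_1$, using the closed-form solution of Theorem~\ref{thm:genform}. Recall that a wave $\{f_j\}$ is determined by, and may be built from, arbitrary Cauchy data $f_0,f_1$; by \eqref{eq:closedform soln} its snapshot at time $k$ is $f_k=U_{k-1}(\mu_1)f_1-U_{k-2}(\mu_1)f_0$. Hence prescribing $f_0=g$ and $f_k=h$ is equivalent to finding $f_1\in\mathcal F(\mathfrak X)$ with
\[
U_{k-1}(\mu_1)\,f_1 \;=\; h + U_{k-2}(\mu_1)\,g .
\]
So it suffices to prove that $U_{k-1}(\mu_1)$ is a surjective operator on $\mathcal F(\mathfrak X)$ with infinite-dimensional kernel: surjectivity yields a suitable $f_1$ (hence a wave with the desired two snapshots), and the infinite-dimensional kernel yields infinitely many such $f_1$.

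To get surjectivity I would appeal to Theorem~\ref{thm:muonto}. Since $k\ge 2$, the Chebyshev polynomial $U_{k-1}$ has degree $k-1\ge 1$, so $U_{k-1}(\mu_1)=\sum_{i=0}^{k-1}c_i\mu_1^{\,i}$ with $c_{k-1}=2^{\,k-1}\neq 0$. This is a polynomial in $\mu_1$ rather than a linear combination of the $\mu_i$, so the one nonroutine point is the change of basis between $\{\mu_1^{\,i}\}$ and $\{\mu_i\}$. Iterating \eqref{eq:mu1muk} (equivalently, using \eqref{eq:mu1k}) shows that $\mu_1^{\,j}=\sum_{i=0}^{j}t_i^{(j)}\mu_i$ with leading coefficient $t_j^{(j)}=(q/(q+1))^{\,j-1}\neq 0$ for $j\ge 1$, while the terms $\mu_1^{\,j}$ with $j<k-1$ involve only $\mu_i$ for $i\le j\le k-2$. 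Substituting gives $U_{k-1}(\mu_1)=\sum_{i=0}^{k-1}d_i\mu_i$ with $d_{k-1}=c_{k-1}\,t_{k-1}^{(k-1)}\neq 0$, which is precisely the form to which Theorem~\ref{thm:muonto} applies; therefore $U_{k-1}(\mu_1)$ is onto. In particular the displayed equation has a solution $f_1$, and the wave with Cauchy data $(g,f_1)$ — obtained recursively from \eqref{eq:waveq2}, or explicitly from \eqref{eq:closedform soln} — satisfies $f_0=g$ and $f_k=h$.

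For the cardinality, Remark~\ref{rmk:infmany} gives that $\ker U_{k-1}(\mu_1)$ is infinite-dimensional, so the set of valid $f_1$ is an infinite-dimensional affine subspace of $\mathcal F(\mathfrak X)$. Two distinct choices of $f_1$ produce waves with the same snapshot at time $0$ but different snapshots at time $1$, hence distinct waves, so there are infinitely many waves with $f_0=g$ and $f_k=h$. The main obstacle I anticipate is exactly the bookkeeping of the previous paragraph: Theorem~\ref{thm:muonto} is stated for combinations $\sum c_i\mu_i$ rather than for arbitrary polynomials in $\mu_1$, so one must check that rewriting $U_{k-1}(\mu_1)$ in the $\mu_i$-basis leaves the top coefficient nonzero; once that is done, the rest is a direct assembly of Theorems~\ref{thm:genform} and~\ref{thm:muonto} together with Remark~\ref{rmk:infmany}.
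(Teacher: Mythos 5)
Your proof is correct and follows essentially the same route as the paper: reduce to solving $U_{k-1}(\mu_1)f_1=h+U_{k-2}(\mu_1)g$ via the closed-form solution \eqref{eq:closedform soln}, then invoke Theorem \ref{thm:muonto} and Remark \ref{rmk:infmany}, with the paper handling the change of basis from powers $\mu_1^i$ to the operators $\mu_i$ more tersely by citing \eqref{eq:mu1k}. Your explicit check that the leading coefficient survives this change of basis is a welcome (and correct) elaboration rather than a divergence, and your sign in the displayed equation is in fact the right one.
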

\begin{proof}
For any wave $\{f_j\}$ on $\mathfrak X$ such that $f_0=g$ and $f_k=h$, the closed form solution \eqref{eq:closedform soln} shows that
\begin{equation}\label{eq:altclosedform}
U_{k-1}(\mu_1)\,f_1 = h-U_{k-2}(\mu_1)\,g.
\end{equation}
Since the Cauchy data $\{f_0,\,f_1\}$ determine the wave $\{f_j\}$, it suffices to prove that there are infinitely many functions $f_1$ on $\mathfrak X$ satisfying the relation \eqref{eq:altclosedform}.

Now \eqref{eq:mu1k} shows that $U_{k-1}(\mu_1)=\sum_{i=1}^{k-1} c_i\,\mu_i$ for some rational numbers $c_i$, with $c_{k-1}\neq 0$.   By Theorem \ref{thm:muonto} and the subsequent Remark \ref{rmk:infmany}, there exist infinitely many functions $f_1\in\mathcal F(\mathfrak X)$ satisfying \eqref{eq:altclosedform}.  This proves the theorem.
\end{proof}

To summarize, we have shown that for $k>1$ there exist infinitely many waves having given snapshots at times $0$ and $k$, but that all such waves have the same snapshots at times which are multiples of $k$.

\section{The Three Snapshot Problem}\label{sec:3Snap}

We now turn to the problem of determining a wave $\{f_j\}$ if we are given the snapshots $f_0$, $f_k$ and $f_\ell$, where $2\leq k < \ell$. For such a wave, one expects to use the results of the preceding section to be able to determine the wave at all times that are integer linear combinations of $k$ and $\ell$. 

\begin{lemma}\label{thm:gcdunique}
Suppose that we are given the snapshots $f_0,\,f_k$, and $f_\ell$ of a wave $\{f_j\}$ on $\mathfrak{X}$. Then for any integers $r$ and $s$, $f_{rk+s\ell}$ is is given by the formula
\begin{equation*}
    f_{rk+s\ell}=T_{s\ell}(\mu_1)f_{rk}+T_{rk}(\mu_1)f_{s\ell}-T_{rk-s\ell}(\mu_1)f_0.
\end{equation*}
\end{lemma}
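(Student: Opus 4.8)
The plan is to establish, for an arbitrary wave $\{f_j\}$ on $\mathfrak X$ and arbitrary integers $a$ and $b$, the symmetric identity
\[
f_{a+b} = T_b(\mu_1)\,f_a + T_a(\mu_1)\,f_b - T_{a-b}(\mu_1)\,f_0,
\]
and then simply substitute $a = rk$ and $b = s\ell$ to obtain the stated formula. Phrasing it this way is natural because, by Theorem~\ref{thm:constmultform}, $f_{rk}$ is determined by the snapshots $f_0$ and $f_k$, and likewise $f_{s\ell}$ by $f_0$ and $f_\ell$; so the identity exhibits $f_{rk+s\ell}$ explicitly in terms of the three given snapshots, which is the point of the lemma.

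To prove the displayed identity I would apply Lemma~\ref{thm:f+f=T} --- i.e.\ the relation $f_{m+j}+f_{m-j}=2T_j(\mu_1)f_m$, valid for all integers $m,j$ --- three times: once with $(m,j)=(a,b)$, giving $f_{a+b}+f_{a-b}=2T_b(\mu_1)f_a$; once with $(m,j)=(b,a)$, giving $f_{a+b}+f_{b-a}=2T_a(\mu_1)f_b$; and once with $(m,j)=(0,a-b)$, giving $f_{a-b}+f_{b-a}=2T_{a-b}(\mu_1)f_0$ (here using $f_{b-a}=f_{-(a-b)}$ together with the convention $T_{-(a-b)}=T_{a-b}$). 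Adding the first two equations and subtracting the third cancels the terms $f_{a-b}$ and $f_{b-a}$ and leaves $2f_{a+b}=2T_b(\mu_1)f_a+2T_a(\mu_1)f_b-2T_{a-b}(\mu_1)f_0$; dividing by $2$ gives the identity, and then substituting $a=rk$, $b=s\ell$ finishes the proof.

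There is essentially no real obstacle here; the only thing to watch is the bookkeeping of indices and signs of the Chebyshev polynomials of the first kind, which is painless because of the convention $T_{-m}=T_m$ and the fact that Lemma~\ref{thm:f+f=T} holds for every integer shift. As an alternative route, one could instead substitute the closed-form solution \eqref{eq:closedform soln} for $f_a$, $f_b$, $f_{a+b}$, and $f_0$ into both sides and reduce the claim to the classical product-to-sum identities $\cos b\theta\sin a\theta+\cos a\theta\sin b\theta=\sin(a+b)\theta$ together with its companion governing the $f_0$-coefficient; but the three-fold use of Lemma~\ref{thm:f+f=T} is shorter and keeps the whole argument at the level of the wave itself, never leaving the world of the $f_j$'s.
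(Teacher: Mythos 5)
Your proof is correct and is essentially identical to the paper's: the paper likewise applies Lemma~\ref{thm:f+f=T} with $(m,j)=(rk,s\ell)$, $(m,j)=(s\ell,rk)$, and then once more at $m=0$ to eliminate $f_{rk-s\ell}+f_{s\ell-rk}$ via $2T_{rk-s\ell}(\mu_1)f_0$. Your only cosmetic difference is stating the identity for general integers $a,b$ before specializing to $a=rk$, $b=s\ell$.
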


\begin{proof}
   By Theorem \ref{thm:f+f=T} we have
\begin{equation*}
    f_{rk+s\ell}+f_{rk-s\ell}=2T_{s\ell}(\mu_1)f_{rk},
\end{equation*}
and 
\begin{equation*}
    f_{s\ell+rk}+f_{s\ell-rk}=2T_{rk}(\mu_1)f_{s\ell}.
\end{equation*}
Adding the two equations and applying Theorem \ref{thm:f+f=T} yet again yields the desired result. 
\end{proof}
This result immediately implies the following theorem.
\begin{theorem}\label{thm: 3snapuniqueness}
 Let $d=\gcd(k,\ell)$. Then $f_d$ is determined by $f_0$, $f_k$, and $f_\ell$.  In particular, if $k$ and $\ell$ are relatively prime, then the wave $\{f_j\}$ is uniquely determined by $f_0,\,f_k$, and $f_\ell$.
\end{theorem}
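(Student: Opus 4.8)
The plan is to combine Lemma \ref{thm:gcdunique} with B\'ezout's identity and the two-snapshot results of Section \ref{sec:2Snap}. First I would observe that Lemma \ref{thm:gcdunique} alone is not quite enough: its right-hand side involves $f_{rk}$ and $f_{s\ell}$, which are not among the three given snapshots. So the first step is to note that, by Theorem \ref{thm:constmultform}, the snapshot $f_{rk}$ is completely determined by $f_0$ and $f_k$ for every integer $r$, and likewise $f_{s\ell}$ is completely determined by $f_0$ and $f_\ell$ for every integer $s$. Hence every snapshot at a time that is an integer multiple of $k$, and every snapshot at a time that is an integer multiple of $\ell$, is already known from the data $f_0,\,f_k,\,f_\ell$.

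Next, since $d=\gcd(k,\ell)$, B\'ezout's identity produces integers $r$ and $s$ with $rk+s\ell=d$. Applying Lemma \ref{thm:gcdunique} with this choice of $r,s$ gives
\[
f_d=f_{rk+s\ell}=T_{s\ell}(\mu_1)f_{rk}+T_{rk}(\mu_1)f_{s\ell}-T_{rk-s\ell}(\mu_1)f_0 ,
\]
and the right-hand side involves only $f_0$ together with the snapshots $f_{rk}$ and $f_{s\ell}$, which were shown in the previous step to be determined by $f_0,\,f_k,\,f_\ell$. Therefore $f_d$ is determined by $f_0,\,f_k$, and $f_\ell$, proving the first assertion.

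For the second assertion, when $k$ and $\ell$ are relatively prime we have $d=1$, so the previous paragraph yields $f_1$; together with the given $f_0$, this constitutes Cauchy data, and Theorem \ref{thm:genform} then shows that the entire wave $\{f_j\}$ is uniquely determined. I do not expect any real obstacle here: the combinatorial and analytic substance is entirely contained in Lemma \ref{thm:gcdunique} and Theorem \ref{thm:constmultform}, and the only point requiring care is the bookkeeping of which snapshots are ``known'' at each stage, namely first passing from $\{f_0,f_k\}$ and $\{f_0,f_\ell\}$ to all multiples of $k$ and of $\ell$ before invoking the linear-combination formula.
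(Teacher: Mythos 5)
Your proposal is correct and follows essentially the same route as the paper: the paper derives Theorem \ref{thm: 3snapuniqueness} as an immediate consequence of Lemma \ref{thm:gcdunique}, implicitly using B\'ezout's identity $rk+s\ell=d$ together with the fact (Theorem \ref{thm:constmultform}) that $f_{rk}$ and $f_{s\ell}$ are determined by $\{f_0,f_k\}$ and $\{f_0,f_\ell\}$ respectively, and then the Cauchy-data uniqueness when $d=1$. You have simply made explicit the bookkeeping the paper leaves to the reader.
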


Since $f_0$ and $f_d$ are determined, we can also determine the snapshots $f_{md}$. Among these snapshots are $f_k$ and $f_\ell$.  In case $d>1$, Theorem~\ref{thm:infsnapshots} states that there are infinitely many waves with given snapshots at times $0$ and $d$. In this case then, if there exists a wave with given snapshots at times $0,\;k$, and $\ell$, then infinitely many such waves exist.

This completes our characterization of the uniqueness of a wave given three snapshots.

We can now turn to the question of existence.  Explicitly, fix positive integers $k$ and $\ell$, with $k<\ell$, and suppose that $f,\,g$, and $h$ are functions on $\mathfrak X$.  Does there exist a wave $\{f_j\}$ on $\mathfrak X$ such that $f_0=f,\,f_k=g$, and $f_\ell=h$?  (We already know that if such a wave exists, it is unique iff $k$ and $\ell$ are relatively prime.)

If such a wave $\{f_j\}$ does exist, then the closed form solution \eqref{eq:closedform soln} implies that 
\begin{equation}\label{eq:4.1}
    g=U_{k-1}(\mu_1)f_1-U_{k-2}(\mu_1)f
\end{equation}
and 
\begin{equation}\label{eq:4.2}
    h=U_{\ell-1}(\mu_1)f_1-U_{\ell-2}(\mu_1)f.
\end{equation}
Eliminating $f_1$, we see that these equations imply that $f,\,g$ and $h$ must satisfy the  \textit{compatibility condition} 
\begin{equation}\label{eq:4.3}
    U_{\ell-1}(\mu_1)(g+U_{k-2}(\mu_1)f) = U_{k-1}(\mu_1)(h+U_{\ell-2}(\mu_1)f).
\end{equation}
 We call any ordered triple of functions $(f,g,h)$ on $\mathfrak{X}$ satisfying \ref{eq:4.3} a \emph{compatible triple}. 

Thus a necessary condition for the existence of a wave with the snapshots $f,\,g$, and $h$ at the three fixed times is that they be a compatible triple.   A natural question to ask is whether the compatibility condition \eqref{eq:4.3} is also sufficient.   That is to say, suppose that  $(f,\,g,\,h)$ is a compatible triple.  Is there a wave $\{f_j\}$ such that $f_0=f,\,f_k=g$, and $f_\ell = h$?

It is not hard to see that this boils down to the question of whether the compatibility relation \eqref{eq:4.3} implies the existence of a function $f_1$ on $\mathfrak X$ for which \eqref{eq:4.1} and \eqref{eq:4.2} hold. One may observe that this in turn reduces to a problem in linear algebra,  for which the following result will be useful.

\begin{proposition}\label{prop:linalg}
      Let $V$ be a vector space (over some field) and suppose that $S$ and $T$ are surjective linear operators on $V$ such that $ST=TS$.  Then the following conditions are equivalent:
    \begin{enumerate}[(i)]
        \item For any vectors $v$ and $w$ in $V$ such that $Sv=Tw$, there exists a vector $u\in V$ such that $Su=w$ and $Tu=v$.        
        \item $S:\ker{T} \to \ker{T}$ is surjective.    
    \end{enumerate}

\begin{proof}
Assume (i) holds. For any $w\in\ker{T}$, we have $S(0)=0=Tw$, so there exists $u\in V$ such that $Tu=0,\;Su=w$, proving (ii). 

Conversely, suppose (ii) holds, and $v$ and $w$ are vectors in $V$ such that $Sv=Tw$.  Since $T$ is surjective, there is a $v_1\in V$ with $Tv_1=v$.  Since $S$ and $T$ commute, this implies that $T(w-Sv_1)=0$.  By (ii), there is a vector $v_2\in\ker T$ such that $w-Sv_1=Sv_2$.  Then $u=v_1+v_2$ satisfies $Su=w$ and $Tu=Tv_1=v$.
\end{proof}

\end{proposition}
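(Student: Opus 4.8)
The plan is to prove the equivalence by establishing the two implications separately; in each direction the argument is a short diagram chase that uses only the commutativity $ST=TS$ together with the surjectivity of $T$. Before beginning, I would record the one preliminary fact needed merely to make sense of statement (ii): $S$ carries $\ker T$ into itself, because $Tu=0$ forces $T(Su)=S(Tu)=0$. This is the first (and essentially only) point at which commutativity enters.

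For (i)$\,\Rightarrow\,$(ii), I would specialize condition (i) to the pair $v=0$ and an arbitrary $w\in\ker T$. Since then $Sv=0=Tw$, condition (i) supplies a vector $u$ with $Tu=0$ and $Su=w$; that is, $u\in\ker T$ and $S$ sends $u$ to $w$. As $w\in\ker T$ was arbitrary, $S\colon\ker T\to\ker T$ is surjective.

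For (ii)$\,\Rightarrow\,$(i), suppose $v,w\in V$ satisfy $Sv=Tw$. Using surjectivity of $T$, pick $v_1\in V$ with $Tv_1=v$. Then, by commutativity, $T(w-Sv_1)=Tw-S(Tv_1)=Tw-Sv=0$, so $w-Sv_1\in\ker T$. Applying (ii) gives a vector $v_2\in\ker T$ with $Sv_2=w-Sv_1$, and I would take $u=v_1+v_2$: this yields $Su=Sv_1+Sv_2=w$ and $Tu=Tv_1+Tv_2=v+0=v$, exactly what (i) requires.

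I do not expect a genuine obstacle here — the statement is purely formal, and the only care needed is in the bookkeeping above: the preliminary remark that $S$ preserves $\ker T$, and the observation that it is surjectivity of $T$ (not of $S$) that gets used, to lift $v$ before correcting it by a kernel element. Surjectivity of $S$ is in fact not logically needed in the proof, but it is harmless as a hypothesis and is present in the intended application, where $S$ and $T$ will be the commuting operators $U_{k-1}(\mu_1)$ and $U_{\ell-1}(\mu_1)$, both surjective by Theorem \ref{thm:muonto}, and where condition (i) translates into the assertion that every compatible triple (i.e.\ every $f,g,h$ satisfying \eqref{eq:4.3}) arises from an actual wave via a suitable $f_1$ solving \eqref{eq:4.1}--\eqref{eq:4.2}.
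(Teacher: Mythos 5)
Your proof is correct and follows essentially the same route as the paper's: specializing (i) to $v=0$, $w\in\ker T$ for one direction, and lifting $v$ via surjectivity of $T$ and correcting by a kernel element for the other. Your added remarks — that commutativity is what makes $S$ preserve $\ker T$, and that surjectivity of $S$ is never actually used — are accurate and only make the bookkeeping more explicit than the paper's version.
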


This simple result leads directly to conditions under which any compatible triple is the set of snapshots of a wave at times $0,\,k,\,\ell$. Note that by Theorem \ref{thm:muonto}, the linear operators $U_{k-1}(\mu_1)$ and $U_{\ell-1}(\mu_1)$ are surjective and commuting linear operators on $\mathcal{F}\mathfrak{(X)}$.
\begin{theorem}\label{thm:waveiffsurj}  Fix positive integers $k$ and $\ell$, with $k<\ell$.  Then the following conditions are equivalent:
   
    \begin{enumerate}[(i)]
        \item For any compatible triple $(f,\,g,\,h)$ of functions on $\mathfrak X$, there a exists a wave with snapshots $f_0=f$, $f_k=g$, $f_\ell=h$.
        \item For any compatible triple $(f,\,g,\,h)$, there exists an $f_1\in \mathcal{F}\mathfrak{(X)}$ such that $U_{k-1}(\mu_1)f_1=g+U_{k-2}(\mu_1)f$ and $U_{\ell-1}(\mu_1)f_1=h+U_{\ell-2}(\mu_1)f$.        
        \item The map $U_{\ell-1}(\mu_1):\ker{U_{k-1}(\mu_1)} \to \ker{U_{k-1}(\mu_1)}$ is surjective.    
    \end{enumerate}
\end{theorem}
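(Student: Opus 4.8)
The plan is to establish the chain (i) $\Leftrightarrow$ (ii) $\Leftrightarrow$ (iii), where the first equivalence is just a restatement via the closed-form solution and the second is an application of Proposition \ref{prop:linalg}.

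For (i) $\Leftrightarrow$ (ii): by Theorem \ref{thm:genform} a wave is determined by its Cauchy data $(f_0,f_1)$, and every snapshot is $f_j=U_{j-1}(\mu_1)f_1-U_{j-2}(\mu_1)f_0$. Hence, for a given triple $(f,g,h)$, a wave with $f_0=f$, $f_k=g$, $f_\ell=h$ exists if and only if there is a function $f_1$ on $\mathfrak X$ satisfying \eqref{eq:4.1} and \eqref{eq:4.2}; conversely, if such an $f_1$ exists, the wave with Cauchy data $(f,f_1)$ has, by Theorem \ref{thm:genform}, $f_k=U_{k-1}(\mu_1)f_1-U_{k-2}(\mu_1)f=g$ and likewise $f_\ell=h$. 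Rearranging \eqref{eq:4.1}–\eqref{eq:4.2} gives exactly the two equations in (ii), so (i) and (ii) say the same thing. (Note the compatibility of $(f,g,h)$ is not even used in this direction; it is exactly the obstruction that must be analyzed to pass back from (iii).)

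For (ii) $\Leftrightarrow$ (iii): apply Proposition \ref{prop:linalg} with $V=\mathcal F(\mathfrak X)$, $S=U_{\ell-1}(\mu_1)$, and $T=U_{k-1}(\mu_1)$. These are surjective by Theorem \ref{thm:muonto} (each $U_{j-1}(\mu_1)$ is, via \eqref{eq:mu1k}, a linear combination $\sum_i c_i\mu_i$ with nonzero top coefficient), and they commute since both are polynomials in $\mu_1$; also $S$ carries $\ker T$ into $\ker T$ because $S$ and $T$ commute. So the proposition applies, and its condition (ii) is precisely condition (iii) of the theorem. It remains to identify the proposition's condition (i) with the theorem's condition (ii). Given a compatible triple $(f,g,h)$, set $v=g+U_{k-2}(\mu_1)f$ and $w=h+U_{\ell-2}(\mu_1)f$; then a direct check shows the compatibility relation \eqref{eq:4.3} is exactly $Sv=Tw$, and the function $f_1$ sought in (ii) is exactly a $u$ with $Tu=v$ and $Su=w$, i.e. the output of the proposition's condition (i). Conversely, every pair $(v,w)$ with $Sv=Tw$ arises from a compatible triple (take $f=0$, $g=v$, $h=w$), so the proposition's condition (i) in full is equivalent to the theorem's condition (ii). Chaining these gives (ii) $\Leftrightarrow$ (iii).

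The argument is essentially bookkeeping once Proposition \ref{prop:linalg} is in hand; the only point requiring care is the correct assignment of $S$ and $T$ — one must take $S=U_{\ell-1}(\mu_1)$, $T=U_{k-1}(\mu_1)$ so that the hypothesis $Sv=Tw$ of the proposition lines up with the compatibility condition \eqref{eq:4.3} rather than its mirror image — together with the small but necessary observation that, because one may set $f=0$, the hypothesis "$Sv=Tw$'' is no more restrictive than "$(f,g,h)$ is a compatible triple.'' I do not anticipate any deeper obstacle here.
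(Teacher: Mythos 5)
Your proof is correct and follows essentially the same route as the paper, whose proof of Theorem \ref{thm:waveiffsurj} simply cites Proposition \ref{prop:linalg}; you merely make explicit the bookkeeping the paper leaves implicit (the (i)$\Leftrightarrow$(ii) reduction via Theorem \ref{thm:genform}, the hypothesis checks for $S=U_{\ell-1}(\mu_1)$, $T=U_{k-1}(\mu_1)$ via Theorem \ref{thm:muonto}, and the $f=0$ observation matching compatible triples with pairs satisfying $Sv=Tw$). No issues.
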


\begin{proof} This is clear from Proposition \ref{prop:linalg}.
\end{proof}

Thus, the existence of a wave is equivalent to the surjectivity of $U_{\ell-1}(\mu_1)$ restricted to $\ker{U_{k-1}(\mu_1)}$. Fortunately, an explicit characterization of $\ker{U_{k-1}(\mu_1)}$ is not needed, but rather surjectivity can be evaluated by considering the polynomial factorizations of $U_{k-1}(\mu_1)$ and $U_{\ell-1}(\mu_1)$, which allows us to determine existence from condition \ref{eq:4.3} for relatively prime $k$ and $\ell$.

\begin{theorem}\label{thm:3snapex}
    
     Let $k$ and $\ell$ be relatively prime. For all $f,g,h\in F(\mathfrak{X})$, the compatibility condition 

    $$U_{k-1}(\mu_1)(g+U_{\ell-2}(\mu_1)f)=U_{\ell-1}(\mu_1)(h+U_{k-2}(\mu_1)f)$$

    is necessary and sufficient for the existence of a unique wave with $f_0=f$, $f_\ell=g$, and $f_k=h$.
\end{theorem}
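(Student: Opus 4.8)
The plan is to reduce to the linear-algebra criterion of Theorem~\ref{thm:waveiffsurj} and then exploit the fact that, for relatively prime $k$ and $\ell$, the Chebyshev polynomials $U_{k-1}$ and $U_{\ell-1}$ are coprime in $\mathbb C[x]$. Necessity of the compatibility condition was already established when deriving \eqref{eq:4.3}, and uniqueness (when a wave exists) is immediate from Theorem~\ref{thm: 3snapuniqueness}, since $\gcd(k,\ell)=1$. So the heart of the matter is sufficiency, and by Theorem~\ref{thm:waveiffsurj} this is equivalent to the surjectivity of $U_{\ell-1}(\mu_1)$ restricted to $\ker U_{k-1}(\mu_1)$.

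The first step is to show that $U_{k-1}$ and $U_{\ell-1}$ have no common root. Using $U_{n-1}(\cos\theta)=\sin(n\theta)/\sin\theta$, the roots of $U_{n-1}$ are exactly the $n-1$ distinct numbers $\cos(j\pi/n)$ with $1\le j\le n-1$; since cosine is injective on $[0,\pi]$, a common root of $U_{k-1}$ and $U_{\ell-1}$ would force $a/k=b/\ell$ for some $1\le a\le k-1$ and $1\le b\le\ell-1$, hence $k\mid a\ell$, and then $k\mid a$ because $\gcd(k,\ell)=1$, which is impossible. (Equivalently, one can invoke the Chebyshev analogue $\gcd(U_{k-1},U_{\ell-1})=U_{\gcd(k,\ell)-1}=U_0=1$ of the identity $\gcd(x^m-1,x^n-1)=x^{\gcd(m,n)}-1$.) As each $U_{n-1}$ has only simple roots over $\mathbb C$, having no common root gives $\gcd(U_{k-1},U_{\ell-1})=1$ in $\mathbb C[x]$, so by Bézout there are $A,B\in\mathbb C[x]$ with $A(x)U_{k-1}(x)+B(x)U_{\ell-1}(x)=1$.

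The second step is the operator manipulation. Substituting $\mu_1$ for $x$ yields the identity $A(\mu_1)U_{k-1}(\mu_1)+B(\mu_1)U_{\ell-1}(\mu_1)=\mathrm{Id}$ on $\mathcal F(\mathfrak X)$, with all operators involved commuting (being polynomials in $\mu_1$). Given $v\in\ker U_{k-1}(\mu_1)$, put $w=B(\mu_1)v$. Applying the identity to $v$ and using $U_{k-1}(\mu_1)v=0$ gives $v=B(\mu_1)U_{\ell-1}(\mu_1)v=U_{\ell-1}(\mu_1)w$; moreover $U_{k-1}(\mu_1)w=B(\mu_1)U_{k-1}(\mu_1)v=0$, so $w\in\ker U_{k-1}(\mu_1)$. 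Hence $U_{\ell-1}(\mu_1)\colon\ker U_{k-1}(\mu_1)\to\ker U_{k-1}(\mu_1)$ is surjective, and Theorem~\ref{thm:waveiffsurj}(i) produces, for every compatible triple $(f,g,h)$, a wave with $f_0=f$, $f_\ell=g$, $f_k=h$; Theorem~\ref{thm: 3snapuniqueness} makes it unique.

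The main obstacle is the coprimality step: the hypothesis $\gcd(k,\ell)=1$ is used precisely, and only, to force $\gcd(U_{k-1},U_{\ell-1})=1$, and it is exactly this that fails in general — which is why the non–relatively-prime case cannot be handled by the same Bézout trick and requires the stronger condition alluded to in the introduction. Everything downstream of the Bézout identity is formal and routine.
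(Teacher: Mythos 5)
Your proposal is correct and follows essentially the same route as the paper: reduce via Theorem~\ref{thm:waveiffsurj} to surjectivity of $U_{\ell-1}(\mu_1)$ on $\ker U_{k-1}(\mu_1)$, establish coprimality of $U_{k-1}$ and $U_{\ell-1}$ when $\gcd(k,\ell)=1$, and conclude by a B\'ezout identity in $\mu_1$, with uniqueness from Theorem~\ref{thm: 3snapuniqueness}. The only cosmetic difference is that you verify the coprimality of the Chebyshev polynomials directly from their explicit roots, where the paper cites the literature.
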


\begin{proof}
    We note that $U_{k-1}(x)$ and $U_{\ell-1}(x)$ are have no common roots if and only if $k$ and $\ell$ are relatively prime. (See Szego's book \cite{Szego1975}.) Thus, $U_{k-1}(x)$ and $U_{\ell-1}(x)$ are relatively prime if and only if $k$ and $\ell$ are relatively prime. It follows that there exist polynomials $Q(z)$ and $R(z)$ satisfying $U_{k-1}(z)Q(z)+U_{\ell-1}(z)R(z)=1$, so that

    $$U_{k-1}(\mu_1)Q(\mu_1)+U_{\ell-1}(\mu_1)R(\mu_1)=\mu_0,$$ 

    where $\mu_0f=f$ for all $f\in \mathcal{F}(\mathfrak{X})$.

    Now suppose that $f\in \ker{U_{k-1}(\mu_1)}$. It follows that
    
    $$f=\mu_0f =(U_{k-1}(\mu_1)Q(\mu_1)+U_{\ell-1}(\mu_1)R(\mu_1))f,$$

    which reduces to 
    
    $$U_{\ell-1}(\mu_1)R(\mu_1)f=f.$$ 
    
Let $g=R(\mu_1)f$. Then $g\in \ker{U_{k-1}(\mu_1)}$ and $U_{\ell-1}g=f$, so that $U_{\ell-1}(\mu_1):\ker{U_{k-1}(\mu_1)} \to \ker{U_{k-1}(\mu_1)}$ is surjective. It follows by \ref{thm:waveiffsurj} that a wave exists, and by \ref{thm: 3snapuniqueness} this wave must be unique.

\end{proof}

   When $k$ and $\ell$ are not relatively prime, the compatibility condition \ref{eq:4.3} is not sufficient for the existence of a wave. For example, if $k=2$ and $\ell=4$, one can show that $U_{3}(\mu_1):\ker{U_{1}(\mu_1)}\to\ker{U_{1}(\mu_1)}$ is not surjective, 
   as $\ker{U_1(\mu_1}) \subset \ker{U_3(\mu_1)}$. The following theorem provides a stronger compatibility condition that is sufficient for all $k$ and $\ell$.

   \begin{theorem}\label{thm:k-ell-arbitrary}
    
     Fix $k,\ell \in \mathbb{Z}$ with ${2\leq k < \ell}$, and let $W(x)=\dfrac{U_{k-1}(x)}{U_{d-1}(x)}$ and $V(x)=\dfrac{U_{\ell-1}(x)}{U_{d-1}(x)}$, where $d=\gcd(k,\ell)$. Then for all $f,g,h\in \mathcal{F}(\mathfrak{X})$, the condition 

    \begin{equation}\label{eq:strongCC}
        W(\mu_1)(g+U_{\ell-2}(\mu_1)f)=V(\mu_1)(h+U_{k-2}(\mu_1)f)
    \end{equation}

    is necessary and sufficient for the existence of a wave with $f_0=f$, $f_\ell=g$, and $f_k=h$.
    
\end{theorem}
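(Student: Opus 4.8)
The plan is to reduce the statement to a surjectivity assertion exactly as in the proof of Theorem~\ref{thm:3snapex}, but carried out for the coprime \emph{quotient} polynomials $W$ and $V$ rather than for $U_{k-1}$ and $U_{\ell-1}$ themselves, and then to lift the resulting solution back along the common factor $U_{d-1}(\mu_1)$. Write $A=U_{k-1}(\mu_1)$, $B=U_{\ell-1}(\mu_1)$, $D=U_{d-1}(\mu_1)$, so that $A=W(\mu_1)\,D$ and $B=V(\mu_1)\,D$, and set $p:=h+U_{k-2}(\mu_1)f$ and $q:=g+U_{\ell-2}(\mu_1)f$, so that \eqref{eq:strongCC} reads $W(\mu_1)q=V(\mu_1)p$. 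A preliminary point, classical and already invoked for Theorem~\ref{thm:3snapex} (see Szego~\cite{Szego1975}), is that $\gcd(U_{k-1},U_{\ell-1})=U_{d-1}$: since $d\mid k$ and $d\mid\ell$, the $d-1$ simple roots $\cos(j\pi/d)$ of $U_{d-1}$ are among the roots of both $U_{k-1}$ and $U_{\ell-1}$, and comparing root sets shows there are no others in common. Hence $W=U_{k-1}/U_{d-1}$ and $V=U_{\ell-1}/U_{d-1}$ are honest (rational-coefficient) polynomials with $\gcd(W,V)=1$.

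For necessity: if $\{f_j\}$ is a wave with $f_0=f$, $f_k=h$, $f_\ell=g$, then by the closed form \eqref{eq:closedform soln} its Cauchy datum $f_1$ satisfies $Af_1=p$ and $Bf_1=q$. Applying $V(\mu_1)$ to the first equation and using that polynomials in $\mu_1$ commute, together with $V(\mu_1)A=W(\mu_1)V(\mu_1)D=W(\mu_1)B$, gives $W(\mu_1)(Bf_1)=V(\mu_1)p$, that is, $W(\mu_1)q=V(\mu_1)p$, which is \eqref{eq:strongCC}.

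For sufficiency, assume $V(\mu_1)p=W(\mu_1)q$. First, exactly as in Theorem~\ref{thm:3snapex}, a B\'ezout identity $W(x)Q_0(x)+V(x)R_0(x)=1$ yields $W(\mu_1)Q_0(\mu_1)+V(\mu_1)R_0(\mu_1)=\mathrm{id}$, and hence for $\phi\in\ker W(\mu_1)$ one has $\phi=V(\mu_1)\bigl(R_0(\mu_1)\phi\bigr)$ with $R_0(\mu_1)\phi\in\ker W(\mu_1)$; thus $V(\mu_1)\colon\ker W(\mu_1)\to\ker W(\mu_1)$ is surjective. Since $W(\mu_1)$ and $V(\mu_1)$ are surjective (Theorem~\ref{thm:muonto}, after rewriting them via \eqref{eq:mu1k} as nontrivial linear combinations of the $\mu_i$) and commute, Proposition~\ref{prop:linalg}, applied with $S=V(\mu_1)$, $T=W(\mu_1)$, $v=p$, $w=q$, produces $e\in\mathcal F(\mathfrak X)$ with $W(\mu_1)e=p$ and $V(\mu_1)e=q$. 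Because $D=U_{d-1}(\mu_1)$ is itself surjective (again Theorem~\ref{thm:muonto}), choose $f_1$ with $Df_1=e$; then $Af_1=W(\mu_1)e=p$ and $Bf_1=V(\mu_1)e=q$. Finally let $\{f_j\}$ be the wave with Cauchy data $f_0=f$ and this $f_1$ (Theorem~\ref{thm:genform}); then \eqref{eq:closedform soln} gives $f_k=Af_1-U_{k-2}(\mu_1)f=h$ and $f_\ell=Bf_1-U_{\ell-2}(\mu_1)f=g$, as required. (When $d>1$ such a wave is not unique, consistently with the discussion following Theorem~\ref{thm: 3snapuniqueness}.)

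The main obstacle is organizing this two-layer structure cleanly: the coprime part $(W,V)$ is where Proposition~\ref{prop:linalg} and the B\'ezout argument apply essentially verbatim, while the common factor $U_{d-1}(\mu_1)$ contributes nothing to the compatibility obstruction but must be "undone" by surjectivity to turn the solution $e$ of the $(W,V)$-system into a genuine Cauchy datum $f_1$ of the original $(U_{k-1},U_{\ell-1})$-system. The only genuinely external ingredient is the Chebyshev identity $\gcd(U_{k-1},U_{\ell-1})=U_{d-1}$ (so that $W,V$ are coprime polynomials), and it is classical.
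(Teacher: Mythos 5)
Your proof is correct and follows essentially the same route as the paper's: establish that $W$ and $V$ are coprime via $\gcd(U_{k-1},U_{\ell-1})=U_{d-1}$, use a B\'ezout identity to get surjectivity on the relevant kernel so that Proposition~\ref{prop:linalg} yields an intermediate function ($e$, the paper's $f^*$), and then lift through the surjective operator $U_{d-1}(\mu_1)$ to obtain the Cauchy datum $f_1$ and hence the wave via \eqref{eq:closedform soln}. The only differences are cosmetic: you also write out the (easy) necessity direction and sketch the Chebyshev gcd fact directly instead of citing \cite{RTW05}, and your bookkeeping of which polynomial acts on $g+U_{\ell-2}(\mu_1)f$ versus $h+U_{k-2}(\mu_1)f$ is cleanly consistent with the theorem's labeling.
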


\begin{proof}
     For any $k$, $\ell$, we have $\gcd(U_{k-1}(x),U_{\ell-1}(x))=U_{d-1}(x)$ as shown in \cite{RTW05}. It follows that $V(\mu_1)$ and $W(\mu_1)$ must be relatively prime polynomials, and applying the same argument as in Theorem \ref{thm:3snapex}, we see that $W(\mu_1):\ker{V(\mu_1)} \to \ker{V(\mu_1)}$ is surjective. Thus by Proposition \ref{prop:linalg} there is some function $f^*$ satisfying 
     
     $$V(\mu_1)f^*=g+U_{k-2}(\mu_1)f$$ 
     and 
     $$W(\mu_1)f^*=h+U_{\ell-2}(\mu_1)f.$$ 
     
     Now, by Theorem \ref{thm:muonto} there is a function $f_1$ such that $U_{d-1}(\mu_1)f_1=f^*$; it follows that condition (ii) of Theorem \ref{thm:waveiffsurj} is satisfied by $f_1$, and thus a wave exists.
    \end{proof}

    \begin{remark}
It is clear that Theorem \ref{thm:k-ell-arbitrary} generalizes Theorem \ref{thm:3snapex}, since in the case where $k$ and $\ell$ are relatively prime, $d=1$ so that $U_{d-1}(x)=U_0(x)=1$.
    \end{remark}

Now, again fixing $2\leq k < \ell$, an ordered triple of functions $(f,g,h)\in\mathcal{F}(\mathfrak{X})$ will be called a \emph{snapshot triple} if there exists a wave on $\mathfrak{X}$ whose snapshots at times $0, k$ and $\ell$ are $f$, $g$ and $h$ respectively.

With this definition, let us now clarify the relation between compatible triples and snapshot triples vis-\`{a}-vis the range $U_{\ell -1}(\mu_1)(\ker U_{k-1}(\mu_1)$).

Fix a wave $\{g_k\}_{k\in\mathbb{Z}}$ on $\mathfrak{X}$ such that $g_0 = f$, $g_k = g$ for some functions $f$, $g$ $\in \mathcal{F}(\mathfrak{X})$. Then by \ref{thm:constmultform}, there exist infinitely many such waves. In any case, $(f,g,g_\ell)$ forms a snapshot triple. By Theorem \ref{thm:3snapex}, the snapshot $g_\ell$ of our wave is given by 
    $$ g_\ell = U_{\ell-1}(\mu_1)g_1 -U_{\ell-2}(\mu_1)g_0.$$
    Now, suppose that $h\in\mathcal{F}(\mathfrak{X})$ such that $(f,g,h)$ is a compatible triple. Then
        $$U_{\ell-1}(\mu_1)(g + U_{k-2}(\mu_1)f) = U_{k-1}(\mu_1)(h + U_{\ell-2}(\mu_1)f)$$

    But we also have 
    
            $$
            U_{\ell-1}(\mu_1)(g + U_{k-2}(\mu_1)f)
            =U_{k-1}(\mu_1)(g_\ell + U_{\ell-2}(\mu_1)f)
            $$

    This implies that $h\in g_\ell + \ker U_{k-1}(\mu_1)$. Conversely, it is easy to see that if $h$ belongs to $g_\ell + \ker U_{k-1}(\mu_1)$, then $(f,g,h)$ is a compatible triple.

    Now suppose that $h$ is a function on $\mathfrak{X}$ such that $(f,g,h)$ is a snapshot triple. Then, there is a wave $\{h_k\}_{k\in\mathbb{Z}}$ such that $h_0 = f$ and $h_k=g$, and $h_\ell = h$. Then we have

    \begin{align*}
        h_\ell = U_{\ell-1}(\mu_1)h_1  - U_{\ell-2}(\mu_1)f\\
        g_\ell = U_{\ell-1}(\mu_1)g_1  - U_{\ell-2}(\mu_1)f\\
    \end{align*}

    which implies that $h - g_\ell = U_{\ell-1}(\mu_1)(h_1-g_1)$. Now, since $g = g_k = h_k$ we have 
    \begin{align*}
       g = U_{k-1}(\mu_1)g_1 - U_{k-2}(\mu_1)f\\
       g = U_{k-1}(\mu_1)h_1 - U_{k-2}(\mu_1)f\\
    \end{align*}

    which implies that $h_1-g_1 \in \ker U_{k-1}(\mu_1)$. This shows that $h_\ell \in g_\ell + U_{\ell-1}(\mu_1)(\ker U_{k-1}(\mu_1))$. Conversely, one can also show that $(g_0, g_k, h)$ is a snapshot triple whenever $h\in g_\ell + U_{\ell-1}(\mu_1)(\ker U_{k-1}(\mu_1))$.

    From this we conclude that a compatible triple $(f,g,h)$ is a snapshot triple if and only if $U_{\ell-1}(\mu_1)$ is surjective on $\ker U_{k-1}(\mu_1)$, a fact which we have already verified in Theorem \ref{thm:waveiffsurj}(iii).


\section{Mean Value Operators and the Euler-Poisson-Darboux equation on $\mathfrak{X}$}\label{sec:EPD}

In classical harmonic analysis, the \emph{Euler-Poisson-Darboux (EPD) equation}  plays a central role, and in particular characterizes the range of the mean value operator on $\mathbb{R}^n$. For homogeneous trees $\mathfrak{X}$, we define an analogue of this operator. 

\begin{definition}
    Let $\{f_k\}_{k=1}^\infty$ be a sequence of complex-valued functions on $\mathfrak{X}$. We say that $\{f_k\}$ is a \emph{solution to the Euler-Poisson-Darboux (EPD) equation} provided that
    \begin{equation}\label{eq:EPD}
    \mu_1 f_k = \frac{q}{q+1}f_{k+1} + \frac{1}{q+1}f_{k-1}, \qquad k\geq 1
    \end{equation}
\end{definition}

We will solve this equation in general for given initial data $f_0$ and $f_1$, but we note that the following result is an immediate consequence of the iterated mean value relation \eqref{eq:mu1muk}.

\begin{proposition}\label{prop:evenEPDsol}
    Let $\{f_k\}_{k=1}^\infty$ be a sequence of functions on $\mathfrak{X}$ such that $f_1=\mu_1 f_0$. Then $\{f_k\}$ satisfies the EPD equation iff for all integers $k\geq 1$,  $f_k = \mu_kf_0$.
\end{proposition}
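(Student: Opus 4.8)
The plan is to prove both implications by induction on $k$, using the recursion \eqref{eq:mu1muk} as the engine. Recall that \eqref{eq:mu1muk} states $\mu_1\mu_k = \frac{q}{q+1}\mu_{k+1} + \frac{1}{q+1}\mu_{k-1}$ for $k \geq 1$. The key observation is that this is exactly the EPD recursion \eqref{eq:EPD} satisfied by the operator-valued sequence $\{\mu_k\}$; applying it to the fixed function $f_0$ should transfer the property to the function-valued sequence $\{\mu_k f_0\}$.

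For the ``if'' direction, suppose $f_k = \mu_k f_0$ for all $k \geq 0$ (note $\mu_0 f_0 = f_0$ and $\mu_1 f_0 = f_1$ match the hypothesis). Then for each $k \geq 1$, applying \eqref{eq:mu1muk} to $f_0$ gives $\mu_1 f_k = \mu_1 \mu_k f_0 = \frac{q}{q+1}\mu_{k+1}f_0 + \frac{1}{q+1}\mu_{k-1}f_0 = \frac{q}{q+1}f_{k+1} + \frac{1}{q+1}f_{k-1}$, which is precisely \eqref{eq:EPD}. This direction requires no induction at all.

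For the ``only if'' direction, suppose $\{f_k\}$ satisfies the EPD equation and $f_1 = \mu_1 f_0$. I will show $f_k = \mu_k f_0$ by strong induction on $k$. The base cases $k = 0$ (trivial, $f_0 = \mu_0 f_0$) and $k = 1$ (the hypothesis) are immediate. For the inductive step, assume $f_j = \mu_j f_0$ for all $j \leq k$ with $k \geq 1$. Solving \eqref{eq:EPD} for $f_{k+1}$ gives $f_{k+1} = \frac{q+1}{q}\bigl(\mu_1 f_k - \frac{1}{q+1}f_{k-1}\bigr) = \frac{q+1}{q}\mu_1 f_k - \frac{1}{q}f_{k-1}$. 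Substituting the inductive hypotheses $f_k = \mu_k f_0$ and $f_{k-1} = \mu_{k-1}f_0$, then using \eqref{eq:mu1muk} to rewrite $\mu_1 \mu_k f_0 = \frac{q}{q+1}\mu_{k+1}f_0 + \frac{1}{q+1}\mu_{k-1}f_0$, the terms involving $\mu_{k-1}f_0$ cancel and we are left with $f_{k+1} = \mu_{k+1}f_0$, completing the induction.

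I do not expect any serious obstacle here: the only subtlety is bookkeeping the indices (making sure $f_0$ is well-defined as part of the sequence, so that the $k=1$ instance of \eqref{eq:EPD} makes sense) and checking that the algebraic cancellation of the $\mu_{k-1}f_0$ terms goes through with the correct coefficients. The essential content is simply that \eqref{eq:mu1muk} is the same three-term recursion as the EPD equation, so the sequence of iterated mean value operators and its specialization to $f_0$ both satisfy it, and a second-order recursion with prescribed initial data $f_0, f_1$ has a unique solution.
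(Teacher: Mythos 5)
Your proof is correct and follows exactly the route the paper intends: the paper dispenses with a written proof by noting the result is ``an immediate consequence of the iterated mean value relation \eqref{eq:mu1muk},'' which is precisely your observation that $\{\mu_k f_0\}$ satisfies the same three-term recursion as the EPD equation, combined with uniqueness of a second-order recursion given $f_0$ and $f_1$. Your write-up simply makes the induction explicit, and the coefficient cancellation checks out.
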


\begin{definition}\label{def:genEPD}
    Let $s$ and $t$ be positive real numbers such that $s+t=1$.  A sequence $\{f_k\}_{k=1}^\infty$ of functions on $\mathfrak X$ is said to satisfy the \emph{generalized EPD equation} provided that
\begin{equation}\label{eq:genEPD}
    \mu_1 f_k=s\,f_{k+1}+t\, f_{k-1},\qquad k\geq 1.
\end{equation}
\end{definition}

Note that \eqref{eq:genEPD} specializes to the wave equation \eqref{E:waveeq} when $s=1/2$ and the EPD equation \eqref{eq:EPD} when $s=q/(q+1)$.

We will now obtain a closed-form solution to the generalized EPD equation for given Cauchy data $f_0$ and $f_1$. This will allow us to derive an explicit formula expressing the mean value operator $\mu_k$ as a $k$-th degree polynomial in $\mu_1$. Both will be expressed in terms of Chebyshev polynomials of the second kind.

\begin{theorem}\label{thm:gensolEPD}

Let $\{f_k\}_{k=1}^\infty$ be a sequence of functions on $\mathfrak{X}$ satisfying the generalized Euler-Poisson-Darboux equation \eqref{eq:genEPD}, with Cauchy data $f_0$ and $f_1$. Then for $k\geq 0$, the snapshot $f_k$ is given by \begin{equation}\label{eq:closedform2}
f_k=\left(\sqrt{\frac ts}\right)^k\,\left[\sqrt{\frac st}\, U_{k-1}\left(\frac{1}{2\sqrt{st}}\mu_1\right) f_1-U_{k-2}\left(\frac{1}{2\sqrt{st}}\mu_1\right) f_0\right]
\end{equation}
\end{theorem}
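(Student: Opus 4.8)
The plan is to reduce the generalized EPD equation \eqref{eq:genEPD} to the ordinary wave equation \eqref{E:waveeq} by a substitution of the form $f_k = \lambda^k g_k$ together with a rescaling of $\mu_1$, and then invoke the closed-form solution \eqref{eq:closedform soln} from Theorem \ref{thm:genform}. First I would look for $\lambda$ so that setting $g_k = \lambda^{-k} f_k$ transforms \eqref{eq:genEPD} into an equation of the form $c\,\mu_1 g_k = (g_{k+1}+g_{k-1})/2$ for some constant $c$. Substituting $f_k = \lambda^k g_k$ into $\mu_1 f_k = s f_{k+1} + t f_{k-1}$ and dividing by $\lambda^k$ gives $\mu_1 g_k = s\lambda\, g_{k+1} + t\lambda^{-1} g_{k-1}$; to make the two coefficients on the right equal we need $s\lambda = t\lambda^{-1}$, i.e. $\lambda^2 = t/s$, so $\lambda = \sqrt{t/s}$. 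With this choice the common coefficient is $s\lambda = \sqrt{st}$, so $g_k$ satisfies $\mu_1 g_k = \sqrt{st}\,(g_{k+1}+g_{k-1})$, equivalently $\frac{1}{2\sqrt{st}}\mu_1 g_k = (g_{k+1}+g_{k-1})/2$. Thus $\{g_k\}$ is a wave with respect to the rescaled mean value operator $\widetilde\mu := \frac{1}{2\sqrt{st}}\mu_1$ in place of $\mu_1$.

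Next I would apply Theorem \ref{thm:genform} with $\mu_1$ replaced by $\widetilde\mu$. The proof of that theorem used only the recursion $U_k(x) = 2x U_{k-1}(x) - U_{k-2}(x)$ and the fact that the sequence satisfies $x\cdot(\text{snapshot}) = (\text{sum of neighbors})/2$ with $x$ the relevant operator; since $\{g_k\}$ satisfies exactly this with $x = \widetilde\mu$, we get $g_k = U_{k-1}(\widetilde\mu)\,g_1 - U_{k-2}(\widetilde\mu)\,g_0$ for all $k$. Then I would substitute back: $g_k = \lambda^{-k} f_k$, $g_0 = f_0$, and $g_1 = \lambda^{-1} f_1 = \sqrt{s/t}\,f_1$. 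This yields
\begin{equation*}
\lambda^{-k} f_k = U_{k-1}\!\left(\tfrac{1}{2\sqrt{st}}\mu_1\right)\!\sqrt{\tfrac st}\,f_1 - U_{k-2}\!\left(\tfrac{1}{2\sqrt{st}}\mu_1\right)\! f_0,
\end{equation*}
and multiplying through by $\lambda^k = (\sqrt{t/s})^k$ gives precisely \eqref{eq:closedform2}.

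One point needing care is the status of the index shift and the role of $f_0$: the generalized EPD equation \eqref{eq:genEPD} is stated only for $k\geq 1$ and involves $f_0$ through the $k=1$ case, so the sequence is genuinely two-sided only in a formal sense; I would note that the substitution and the Chebyshev recursion are purely algebraic identities valid for all $k\geq 0$ once $f_0, f_1$ are fixed, and that $U_{-1} = 0$ handles the $k=0$ base case ($f_0 = g_0$) and $U_0 = 1$, $U_{-1}=0$ the $k=1$ case. A second, more cosmetic point is that $\sqrt{s/t}$ and $\sqrt{t/s}$ should be interpreted as the positive square roots (legitimate since $s,t>0$), and one should check the bookkeeping of which power of $\lambda$ lands where — this is the only place an error could creep in, but it is routine. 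I do not anticipate a genuine obstacle here: the entire content is the observation that the conjugation $f_k \mapsto (\sqrt{t/s})^{-k} f_k$ together with the dilation $\mu_1 \mapsto \frac{1}{2\sqrt{st}}\mu_1$ intertwines \eqref{eq:genEPD} with \eqref{E:waveeq}, after which Theorem \ref{thm:genform} does all the work.
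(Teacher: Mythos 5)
Your proposal is correct, but it organizes the argument differently from the paper. The paper proves the formula by direct strong induction on $k$: it checks the base cases $k=0,1$ using $U_{-1}=0$, $U_0=1$, $U_{-2}=-1$, and then substitutes the inductive expressions for $f_k$ and $f_{k-1}$ into $f_{k+1}=\tfrac1s\mu_1 f_k-\tfrac ts f_{k-1}$, carrying the powers of $\sqrt{t/s}$ through the computation and invoking the Chebyshev recursion $U_k(x)=2xU_{k-1}(x)-U_{k-2}(x)$ to collapse the result. You instead perform the gauge transformation $g_k=(\sqrt{t/s})^{-k}f_k$, which intertwines the generalized EPD equation with the wave-type recursion $\widetilde\mu\, g_k=(g_{k+1}+g_{k-1})/2$ for the rescaled operator $\widetilde\mu=\tfrac{1}{2\sqrt{st}}\mu_1$, and then reuse the argument of Theorem \ref{thm:genform}. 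Your bookkeeping is right ($\lambda=\sqrt{t/s}$, common coefficient $s\lambda=\sqrt{st}$, $g_1=\sqrt{s/t}\,f_1$), and you correctly flag the two points that need care: Theorem \ref{thm:genform} cannot be cited verbatim since it is stated for $\mu_1$ and for two-sided waves, but its proof is a purely algebraic induction on the recursion $g_{k+1}=2\widetilde\mu g_k-g_{k-1}$ valid for any linear operator, and only the range $k\geq 1$ of the EPD equation is needed to determine $f_k$ for $k\geq 0$ from the Cauchy data. What your route buys is conceptual clarity — it explains the origin of the prefactor $(\sqrt{t/s})^k$ and of the rescaled argument $\tfrac{1}{2\sqrt{st}}\mu_1$, and it isolates the Chebyshev induction in one place rather than repeating it with extra constants; what the paper's direct induction buys is self-containment, since it never has to argue that an earlier theorem's proof generalizes to an operator that is not a mean value operator.
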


\begin{remark}
    The formula \eqref{eq:closedform soln} solving the wave equation is a special case of \eqref{eq:closedform2}, where $s=t=1/2$.
\end{remark}

\begin{proof}
We use strong induction on $k$.  Since $U_0(x)=1,\,U_{-1}(x)=0$, and $U_{-2}(x)=-U_0(x)=-1$, it is clear that the equality \eqref{eq:closedform2} holds when $k=0,\,1$.  So let $k\geq 1$ and assume that the equality \eqref{eq:closedform2} holds for all $\ell\leq k$.  Using the recursion formula for $U_\ell$, we obtain
\begin{align*}
    f_{k+1}&=\frac 1s\,\mu_1 f_k-\frac ts\, f_{k-1}\\
    &=\frac 1s\,\left(\sqrt{\frac ts}\right)^k\,\left[\sqrt{\frac st}\,\mu_1\, U_{k-1}\left(\frac{1}{2\sqrt{st}}\mu_1\right) f_1-\mu_1\,U_{k-2}\left(\frac{1}{2\sqrt{st}}\mu_1\right) f_0\right]\\
    &\qquad -\frac ts\,\left(\sqrt{\frac ts}\right)^{k-1}\,\left[\sqrt{\frac st}\, U_{k-2}\left(\frac{1}{2\sqrt{st}}\mu_1\right) f_1-U_{k-3}\left(\frac{1}{2\sqrt{st}}\mu_1\right) f_0\right]\\
    &=\left(\frac ts\right)^{k/2}\,\left[2\,\left(\frac{1}{2\sqrt{st}}\,\mu_1\right)\,U_{k-1}\left(\frac{1}{2\sqrt{st}}\,\mu_1\right)-
    U_{k-2}\left(\frac{1}{2\sqrt{st}}\,\mu_1\right)\right]\,f_1\\
    &\;-\left(\frac ts\right)^{(k+1)/2}\,\left[2\,\left(\frac{1}{2\sqrt{st}}\,\mu_1\right)\,U_{k-2}\left(\frac{1}{2\sqrt{st}}\,\mu_1\right)-
    U_{k-3}\left(\frac{1}{2\sqrt{st}}\,\mu_1\right)\right]\,f_1\\
    &=\left(\frac ts\right)^{k/2}\,U_k\left(\frac{1}{2\sqrt{st}}\,\mu_1\right)\,f_1
    -\;\left(\frac ts\right)^{(k+1)/2}\,U_{k-1}\left(\frac{1}{2\sqrt{st}}\,\mu_1\right)\,f_0\\
    &=\left(\sqrt{\frac ts}\right)^{(k+1)/2}\,\left[\sqrt{\frac st}\, U_{k}\left(\frac{1}{2\sqrt{st}}\mu_1\right) f_1-U_{k-1}\left(\frac{1}{2\sqrt{st}}\mu_1\right) f_0\right],
\end{align*}
completing the induction step.

\end{proof}

Proposition \ref{prop:evenEPDsol} and Theorem \ref{thm:gensolEPD} allow us to obtain a closed form expression of $\mu_k$ as a $k$-th degree polynomial in $\mu_1$. We note that such an expression is not novel. In 1988, Cohen and Picardello \cite{PCPompeiu} obtained:

\begin{proposition}\label{prop:sigmapolys}\cite{PCPompeiu}
    Let the operator $\Sigma_m = (q+1)q^{m-1}\mu_m$ for all integers $m$. 
    Then $\Sigma_1\Sigma_1$ = $\Sigma_2 + (q+1)\Sigma_0$, and for $n>1$, $\Sigma_1\Sigma_n = \Sigma_{n+1}+q\Sigma_{n-1}$.

    Now let $p(x)$ be a polynomial such that for $n>2$, $p_{n+1}(x) = xp_n(x) - qp_{n-1}(x)$, with initial data $p_0(x) = 1$, $p_1(x) = x$, and $p_2(x) = x^2 - (q+1)$. Taken together, it follows that $\Sigma_n = p_n(\Sigma_1)$.

\end{proposition}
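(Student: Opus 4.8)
The plan is to proceed in two stages: first verify the quadratic multiplication rules for the sphere-sum operators $\Sigma_m$, and then deduce the polynomial identity $\Sigma_n = p_n(\Sigma_1)$ by strong induction on $n$. It is worth noting at the outset that the normalization $\Sigma_m = (q+1)q^{m-1}\mu_m$ just says $\Sigma_m f(v) = \sum_{w\in S_m(v)} f(w)$ for $m\geq 1$, while at $m=0$ one reads $\Sigma_0 = \mu_0 = \mathrm{Id}$ (the sum over $S_0(v)=\{v\}$); this is the convention under which the stated identity $\Sigma_1\Sigma_1 = \Sigma_2 + (q+1)\Sigma_0$ is correct.

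For the multiplication rules I would translate the iterated mean-value formula already in hand rather than argue combinatorially. Since $\Sigma_1 = (q+1)\mu_1$ and $\Sigma_m = (q+1)q^{m-1}\mu_m$ for $m\geq 1$, one has $\Sigma_1\Sigma_1 = (q+1)^2\mu_1^2$; applying \eqref{eq:IMV} with $k=\ell=1$ (the middle sum being empty) gives $\mu_1^2 = \tfrac{q}{q+1}\mu_2 + \tfrac{1}{q+1}\mu_0$, hence $\Sigma_1\Sigma_1 = (q+1)q\,\mu_2 + (q+1)\mu_0 = \Sigma_2 + (q+1)\Sigma_0$. For $n\geq 2$, $\Sigma_1\Sigma_n = (q+1)^2 q^{n-1}\mu_1\mu_n$, and \eqref{eq:mu1muk} gives $\mu_1\mu_n = \tfrac{q}{q+1}\mu_{n+1} + \tfrac{1}{q+1}\mu_{n-1}$, whence $\Sigma_1\Sigma_n = (q+1)q^n\mu_{n+1} + (q+1)q^{n-1}\mu_{n-1} = \Sigma_{n+1} + q\Sigma_{n-1}$, the last step using $n-1\geq 1$ so that $(q+1)q^{n-1}\mu_{n-1} = q\Sigma_{n-1}$. (Alternatively one can count the pairs $(w,u)$ with $w\in S_1(v)$ and $u\in S_n(w)$, sorted by $d(v,u)\in\{n-1,n+1\}$; the point requiring care there is that a vertex $u$ with $d(v,u)=n-1\geq 1$ lies in $S_n(w)$ for exactly the $q$ neighbors $w$ of $v$ off the geodesic $[v,u]$, whereas when $n=1$ and $u=v$ all $q+1$ neighbors occur --- the source of the anomalous coefficient $q+1$.)

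The identity $\Sigma_n = p_n(\Sigma_1)$ now follows by strong induction on $n$. The cases $n=0,1$ are immediate from $p_0=1$ and $p_1=x$, and $n=2$ from $\Sigma_2 = \Sigma_1^2 - (q+1)\Sigma_0 = \Sigma_1^2 - (q+1) = p_2(\Sigma_1)$. For $n\geq 2$, assuming $\Sigma_m = p_m(\Sigma_1)$ for all $m\leq n$, the rule $\Sigma_{n+1} = \Sigma_1\Sigma_n - q\Sigma_{n-1}$ together with the three-term recursion $p_{n+1}(x) = x\,p_n(x) - q\,p_{n-1}(x)$ gives $\Sigma_{n+1} = \Sigma_1 p_n(\Sigma_1) - q\,p_{n-1}(\Sigma_1) = p_{n+1}(\Sigma_1)$, closing the induction. (I take the $p$-recursion to hold for all $n\geq 2$, so that it generates $p_3,p_4,\dots$ from $p_1$ and $p_2$; the anomalous initial polynomial $p_2 = x^2-(q+1)$ --- rather than the $x^2-q$ a pure recursion from $p_0,p_1$ would give --- is precisely what absorbs the extra $(q+1)\Sigma_0$ appearing in $\Sigma_1^2$ in place of $q\Sigma_0$.) There is no real obstacle here: the computation is routine, and the only things demanding attention are the $m=0$ normalization of $\Sigma_0$ and the anomalous base case, plus --- if one takes the combinatorial route to the multiplication rules --- the neighbor count noted above.
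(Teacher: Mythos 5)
Your proof is correct. Note that the paper itself does not prove this proposition --- it is quoted from Cohen--Picardello \cite{PCPompeiu} --- so there is no in-paper argument to compare against; what you have done is supply the natural self-contained derivation from the paper's own machinery. Translating $\Sigma_1\Sigma_1$ and $\Sigma_1\Sigma_n$ through Proposition \ref{prop:IMV} and \eqref{eq:mu1muk} is essentially the same counting that underlies the original reference (your parenthetical neighbor count is exactly the combinatorial core, including the $q+1$ versus $q$ anomaly at $n=1$), and the strong induction giving $\Sigma_n=p_n(\Sigma_1)$ is routine once the two multiplication rules are in hand. Two of your side remarks are genuinely worth making explicit: the normalization $\Sigma_0=\mathrm{Id}$ (rather than the literal $(q+1)q^{-1}\mu_0$ that the formula $\Sigma_m=(q+1)q^{m-1}\mu_m$ would give at $m=0$) is indeed the convention the paper itself uses later in Section \ref{sec:2circle}, and it is the only reading under which $\Sigma_1\Sigma_1=\Sigma_2+(q+1)\Sigma_0$ holds; and the recursion for $p_n$ must be read as holding for all $n\geq 2$ (so that $p_3$ is generated from $p_2,p_1$), consistent with the multiplication rule being valid for all $n>1$ --- the statement's ``$n>2$'' is best regarded as a slip. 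With those conventions fixed, your induction closes with no gaps.
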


They then found a trigonometric form of $p_n(x)$ using Cohen's earlier study of a similar recursive polynomial in \cite{Cohen82} where instead $q=2t-1$. Pagliacci also references this recursive polynomial from \cite{Cohen82} in the proof of his solution to the wave equation \eqref{E:wave-eqn} in \cite{Pagliacci93}. He places their origin in a 1930 paper by Geronimus \cite{GeronimusPolys}\textemdash indeed, this family of Chebyshev-type orthogonal polynomials are sometimes called \textit{Geronimus polynomials} (cf. \cite{Lebedev}, \cite{PEHERSTORFER1992}).

If we put $s=q/(q+1)$ and $t=1/(q+1)$ in \eqref{eq:genEPD}, and let $f_1=\mu_1 f_0$, then the uniqueness of the solution to the Cauchy problem for the EPD equation in Proposition \ref{prop:evenEPDsol} gives us the following result.

\begin{theorem}\label{thm:dicksonmuk}
    The mean value operator $\mu_k$ on $\mathfrak{X}$ as a polynomial in $\mu_1$ is given by the following expression in Chebyshev polynomials of the second kind:

    $$\mu_k = \left(\frac{1}{\sqrt{q}}\right)^k\left[\sqrt q\,\mu_1\,U_{k-1}\left(\frac{q+1}{2\sqrt{q}}\mu_1\right) - U_{k-2}\left(\frac{q+1}{2\sqrt{q}}\mu_1\right)\right]$$
    
\end{theorem}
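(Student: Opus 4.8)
The plan is to derive the formula as a direct corollary of Proposition \ref{prop:evenEPDsol} together with the closed-form solution of the generalized EPD equation in Theorem \ref{thm:gensolEPD}. As the paragraph preceding the statement indicates, one sets $s=q/(q+1)$ and $t=1/(q+1)$ in the generalized EPD equation \eqref{eq:genEPD}; with this choice, \eqref{eq:genEPD} becomes exactly the EPD equation \eqref{eq:EPD}. So first I would observe that if we choose Cauchy data $f_0$ (an arbitrary function on $\mathfrak X$) and $f_1 := \mu_1 f_0$, then the sequence $\{\mu_k f_0\}_{k\geq 0}$ is, by Proposition \ref{prop:evenEPDsol}, the unique solution of the EPD equation with this Cauchy data.

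Second, I would apply Theorem \ref{thm:gensolEPD} to the same Cauchy data. Since $s = q/(q+1)$ and $t = 1/(q+1)$, we have $t/s = 1/q$, hence $\sqrt{t/s} = 1/\sqrt q$ and $\sqrt{s/t} = \sqrt q$, while $2\sqrt{st} = 2\sqrt{q}/(q+1)$, so $1/(2\sqrt{st}) = (q+1)/(2\sqrt q)$. Substituting these into \eqref{eq:closedform2} gives
$$
\mu_k f_0 = \left(\tfrac{1}{\sqrt q}\right)^k\left[\sqrt q\, U_{k-1}\!\left(\tfrac{q+1}{2\sqrt q}\mu_1\right) f_1 - U_{k-2}\!\left(\tfrac{q+1}{2\sqrt q}\mu_1\right) f_0\right],
$$
and then replacing $f_1$ by $\mu_1 f_0$ yields the claimed operator identity applied to $f_0$. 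Since $f_0 \in \mathcal F(\mathfrak X)$ was arbitrary, the operator identity $\mu_k = (1/\sqrt q)^k[\sqrt q\,\mu_1\,U_{k-1}((q+1)/(2\sqrt q)\,\mu_1) - U_{k-2}((q+1)/(2\sqrt q)\,\mu_1)]$ holds on all of $\mathcal F(\mathfrak X)$.

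There is essentially no obstacle here beyond bookkeeping: the only mild subtlety is checking that both sequences $\{\mu_k f_0\}$ and the sequence defined by the right-hand side of \eqref{eq:closedform2} genuinely share the same Cauchy data (i.e.\ that the $k=0$ and $k=1$ cases of \eqref{eq:closedform2} reduce to $f_0$ and $\mu_1 f_0$ respectively), which uses $U_0 = 1$, $U_{-1} = 0$, $U_{-2} = -1$ exactly as in the proof of Theorem \ref{thm:gensolEPD}. Once that is noted, uniqueness in Proposition \ref{prop:evenEPDsol} forces the two expressions to agree for every $k$. One may also remark, as a sanity check, that both sides are polynomials in $\mu_1$ with coefficients summing to $1$ (which is automatic, since $\mu_k$ and $\mu_1$ fix constants and $U_{n}$ applied appropriately reproduces this normalization), and that the parity of the powers of $1/\sqrt q$ and of the $U_n$ arguments conspire so that the right-hand side is manifestly a polynomial in $\mu_1$ of degree $k$ with rational coefficients, matching the expansion $\mu_k = \sum_{i=0}^k r_i \mu_1^i$ promised after \eqref{eq:mu1muk}.
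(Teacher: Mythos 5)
Your argument is correct and is exactly the paper's proof: the paper also obtains the formula by setting $s=q/(q+1)$, $t=1/(q+1)$ in Theorem \ref{thm:gensolEPD}, taking $f_1=\mu_1 f_0$, and invoking Proposition \ref{prop:evenEPDsol} to identify the resulting solution with $\{\mu_k f_0\}$. Your substitutions $\sqrt{t/s}=1/\sqrt q$, $\sqrt{s/t}=\sqrt q$, $1/(2\sqrt{st})=(q+1)/(2\sqrt q)$ match, so nothing is missing.
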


\section{The Two-Circle Existence Problem for Mean Value Operators on $\mathfrak{X}$}\label{sec:2circle}

The following result due to Cohen and Picardello \cite{PCPompeiu} provides the solution to the Two-Circle Pompeiu Problem on $\mathfrak{X}$.

\begin{theorem}\label{6.1}\cite{PCPompeiu}
    Let $k$ and $\ell$ be distinct integers that are not both odd. Let $q+1$ be the degree of $\mathfrak{X}$, and suppose additionally that if $q=2$ then $k$ and $\ell$ are  not both congruent to $4 \mod 6$. Then for any function $f$ on  $\mathfrak{X}$, $\mu_kf=\mu_{\ell}f=0$ if and only if $f=0$.

\end{theorem}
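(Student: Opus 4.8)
The plan is to reduce Theorem~\ref{6.1} to a coprimality statement about polynomials in $\mu_1$, in the spirit of the factorization argument behind Theorem~\ref{thm:3snapex}. We may assume $0<k<\ell$ and put $n=\ell-k$. By Theorem~\ref{thm:dicksonmuk} (equivalently Proposition~\ref{prop:sigmapolys}) we write $\mu_k=P_k(\mu_1)$ and $\mu_\ell=P_\ell(\mu_1)$, where
\[
P_k(x)=\frac{q^{-k/2}}{q+1}\left[q\,U_k\!\left(\tfrac{q+1}{2\sqrt q}\,x\right)-U_{k-2}\!\left(\tfrac{q+1}{2\sqrt q}\,x\right)\right]\in\mathbb Q[x]
\]
has degree $k$, and similarly for $P_\ell$. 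Then $\mu_k f=\mu_\ell f=0$ is exactly $P_k(\mu_1)f=P_\ell(\mu_1)f=0$. If $P_k$ and $P_\ell$ are relatively prime in $\mathbb C[x]$, Bézout gives $A,B\in\mathbb C[x]$ with $A\,P_k+B\,P_\ell=1$, whence $f=A(\mu_1)P_k(\mu_1)f+B(\mu_1)P_\ell(\mu_1)f=0$; the reverse implication is trivial. So the whole theorem comes down to showing that, under the stated hypotheses on $k,\ell,q$, the polynomials $P_k$ and $P_\ell$ have no common root.

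To locate the roots of $P_k$ I would substitute $x=\tfrac{2\sqrt q}{q+1}\cos\phi$ and set $u=e^{2i\phi}$. Using $U_m(\cos\phi)=e^{-im\phi}(1+u+\cdots+u^m)$, the equation $P_k(x)=0$ becomes $R_k(u)=0$ with $R_k(u)=q(1+u+\cdots+u^k)-(u+\cdots+u^{k-1})$; since $R_k(1)=q(k+1)-(k-1)\neq 0$, clearing the factor $u-1$ shows this is equivalent to $u^k(qu-1)=q-u$. If $x_0$ is a common root of $P_k$ and $P_\ell$, the corresponding $u$ satisfies $u^k=\tfrac{q-u}{qu-1}=u^\ell$, hence $u^{n}=1$: $u$ must be a root of unity. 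Writing $u=e^{i\beta}$, the relation $u^k(qu-1)=q-u$ rearranges to $q\,\bigl\lvert\sin\tfrac{(k+1)\beta}{2}\bigr\rvert=\bigl\lvert\sin\tfrac{(k-1)\beta}{2}\bigr\rvert$, equivalently to $\tan\tfrac{k\beta}{2}=-\tfrac{q+1}{q-1}\tan\tfrac{\beta}{2}$ whenever $u\neq\pm1$.

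Eliminating these possibilities is the crux, and the step I expect to be the main obstacle. The degenerate case $\sin\tfrac{(k+1)\beta}{2}=0$ forces $u=-1$ and $k$ odd, hence also $\ell$ odd (since $u^{n}=1$ then requires $n$ even) --- precisely the excluded ``both odd'' case, in which $x_0=0$ is indeed a common root of $P_k$ and $P_\ell$. Otherwise, let $d\mid n$ be the exact order of $u$ and let $r\equiv k\pmod d$; then $u^k(qu-1)=q-u$ says the cyclotomic polynomial $\Phi_d(t)$ divides $G_r(t):=q(1+t+\cdots+t^r)-(t+\cdots+t^{r-1})$ in $\mathbb Z[t]$. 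Since $\Phi_d$ is monic with integer coefficients while every interior coefficient of $G_r$ equals $q-1$ and the two extreme coefficients equal $q$, one first rules out $r\in\{0,1\}$ and $r=d-1$ (which return $q\in\{-1,0\}$ or the already-handled $u=-1$), and then a finite case analysis --- the technical heart of the argument, using Niven's theorem on rational multiples of $\pi$ with rational trigonometric values together with the requirement that $u^k=\tfrac{q-u}{qu-1}$ be a root of unity of the correct order --- leaves exactly one surviving possibility: $q=2$, $d=6$, $u=e^{\pm i\pi/3}$, and $k\equiv\ell\equiv 4\pmod 6$. This is precisely the case excluded in the hypotheses; one checks directly that then $u=e^{i\pi/3}$ really is a common root of $P_k$ and $P_\ell$, with $x_0=\sqrt6/3$.

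Therefore, under all the hypotheses of the theorem, $P_k$ and $P_\ell$ are coprime, and the Pompeiu property follows from the Bézout step of the first paragraph. Moreover, the two common-root computations just described (the root $x_0=0$ in the ``both odd'' case and the root $x_0=\sqrt6/3$ in the $q=2$ case) show that the hypotheses cannot be weakened.
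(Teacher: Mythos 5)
This statement is quoted by the paper from Cohen--Picardello \cite{PCPompeiu} without proof, so there is no in-paper argument to compare against; what you propose is essentially the known route (reduce joint injectivity of $\mu_k,\mu_\ell$ to coprimality of the polynomials $P_k,P_\ell$ with $\mu_m=P_m(\mu_1)$, then classify common roots via roots of unity), and your setup is correct as far as it goes: the formula for $P_k$ matches Theorem \ref{thm:dicksonmuk}, the substitution $x=\tfrac{2\sqrt q}{q+1}\cos\phi$, the reduction to $u^k(qu-1)=q-u$ with $u^{\ell-k}=1$, and the two exceptional configurations ($u=-1$ when $k,\ell$ are both odd, giving the common root $x_0=0$; and $q=2$, $u$ a primitive sixth root of unity, $k\equiv\ell\equiv 4\pmod 6$, giving $x_0=\sqrt 6/3$) are all verified correctly, as is the Bézout step showing coprimality implies the Pompeiu property.

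The genuine gap is exactly where you flag it: the claim that, outside these two configurations, no root of unity satisfies $u^k(qu-1)=q-u$ is asserted, not proved, and it is the entire content of the theorem. Writing $u=e^{i\beta}$, the equation is equivalent to $q\sin\tfrac{(k+1)\beta}{2}=\sin\tfrac{(k-1)\beta}{2}$ with $\beta$ a rational multiple of $\pi$ and $q$ an arbitrary integer $\geq 1$; ruling out solutions is a Diophantine problem about an integer ratio of sines of rational angles, and Niven's theorem (which only classifies when a single sine or cosine of a rational angle is rational) does not by itself reduce this to finitely many cases --- a priori the order $d$ of $u$ ranges over all divisors of $\ell-k$, and bounding the possibilities is precisely the nontrivial step. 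One needs either the Conway--Jones classification of rational trigonometric relations or the ad hoc analysis carried out in \cite{PCPompeiu}; for instance, with $u$ of order $d$ and $r\equiv k\pmod d$ the equation forces $q=\lvert\sin(\pi(r-1)m/d)\rvert/\lvert\sin(\pi(r+1)m/d)\rvert$ together with a phase condition, and showing that $q\geq 2$ admits only the $(q,d,r)=(2,6,4)$ solution requires a genuine argument, not merely a finite check. (Two small slips in passing: the case $r=1$ does not return ``$q=0$'' but rather forces $u^2=1$, i.e. the already-excluded $u=-1$; and your displayed relation holds without absolute values, since $q(u^{k+1}-1)=u^k-u$ has identical phases on both sides, which actually makes the analysis cleaner than the modulus-only form you state.) So the proposal is a correct skeleton with the decisive number-theoretic core left unproven.
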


The Pompeiu problem poses a uniqueness question for functions given two mean values\textemdash from Theorem \ref{6.1}, we can see that for arbitary functions $f,g$, $\mu_kf=\mu_kg$ and $\mu_\ell f=\mu_\ell g$ if and only if $f=g$ when the conditions are satisfied. Analogous to the snapshot problem, we can ask the corresponding existence question: for functions $g,h$ on $\mathfrak{X}$, what conditions are necessary and sufficient to guarantee the existence of a function $f$ such that $\mu_kf=g$ and $\mu_\ell f = h$?

If such an $f$ exists, then the necessary condition $\mu_\ell g = \mu_k h$ quickly follows. It can be shown that this condition is also sufficient. Indeed, the conditions in Theorem \ref{6.1} are equivalent to the conditions for $\Sigma_{k}$ and $\Sigma_{\ell}$ (cf. \ref{prop:sigmapolys}) to be relatively prime polynomials of $\Sigma_1$, and the condition $\mu_\ell g = \mu_k h$ is equivalent to the condition $\Sigma_\ell G = \Sigma_k H$, where $G=q^\ell g$ and $H=q^k h$. Thus, when the conditions on $k$ and $\ell$ in Theorem $6.1$ are satisfied, applying the same reasoning as in Theorem $5.7$ yields the existence of an $f$ satisfying the conditions. This is stated in the following theorem.

\begin{theorem}
    Let $g$ and $h$ be functions on $\mathfrak{X}$, and suppose $k$ and $\ell$ satisfy the conditions in Theorem $\ref{6.1}$. Then $\mu_\ell g=\mu_k h$ if and only if there exists a function $f$ such that $\mu_k f=g$ and $\mu_\ell f=h$.

\end{theorem}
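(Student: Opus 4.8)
The plan is to reduce the statement to exactly the linear-algebra situation already handled in the three-snapshot analysis, and then invoke Proposition~\ref{prop:linalg}. First I would translate everything into the polynomials $\Sigma_m = (q+1)q^{m-1}\mu_m = p_m(\Sigma_1)$ from Proposition~\ref{prop:sigmapolys}, so that the operators in play are honest polynomials in the single operator $\Sigma_1$. With $G = q^\ell g$ and $H = q^k h$ (as suggested in the paragraph preceding the statement), the hypothesis $\mu_\ell g = \mu_k h$ becomes $\Sigma_\ell G = \Sigma_k H$ (after clearing the common factor $(q+1)q^{k+\ell-1}$, up to a bookkeeping power of $q$ that I would track carefully), and the desired conclusion $\mu_k f = g$, $\mu_\ell f = h$ becomes $\Sigma_k f = G'$, $\Sigma_\ell f = H'$ for suitably rescaled $G', H'$. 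So I would set $S = \Sigma_\ell = p_\ell(\Sigma_1)$ and $T = \Sigma_k = p_k(\Sigma_1)$; these commute since both are polynomials in $\Sigma_1$.

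Next I would verify the two hypotheses of Proposition~\ref{prop:linalg}: that $S$ and $T$ are surjective, and that $S : \ker T \to \ker T$ is surjective. Surjectivity of $\Sigma_k$ and $\Sigma_\ell$ on $\mathcal F(\mathfrak X)$ is immediate from Theorem~\ref{thm:muonto}, since each $\Sigma_m$ is a nonzero scalar multiple of $\mu_m$ plus lower-order terms — more precisely $\Sigma_m = (q+1)q^{m-1}\mu_m$, which is certainly of the form $\sum_{i=0}^m c_i \mu_i$ with $c_m \neq 0$. For the second hypothesis, the point is that under the conditions of Theorem~\ref{6.1} the polynomials $p_k(x)$ and $p_\ell(x)$ are relatively prime in $\mathbb C[x]$: indeed, Theorem~\ref{6.1} asserts that $\mu_k f = \mu_\ell f = 0 \implies f = 0$, i.e.\ $\ker p_k(\Sigma_1) \cap \ker p_\ell(\Sigma_1) = 0$, and (as the paragraph before the theorem statement indicates) this is in fact \emph{equivalent} to $\gcd(p_k, p_\ell) = 1$. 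Granting coprimality, there are polynomials $A, B$ with $A(x)p_k(x) + B(x)p_\ell(x) = 1$, hence $A(\Sigma_1)T + B(\Sigma_1)S = I$ on $\mathcal F(\mathfrak X)$. Then for $f \in \ker T$ we get $f = B(\Sigma_1)S f = S\bigl(B(\Sigma_1)f\bigr)$, and $B(\Sigma_1)f \in \ker T$ because $T$ commutes with $B(\Sigma_1)$; this exhibits $S$ as surjective on $\ker T$. This is precisely the argument already run in the proof of Theorem~\ref{thm:3snapex}.

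With both hypotheses of Proposition~\ref{prop:linalg} in hand, condition (ii) of that proposition gives: whenever $S v = T w$ there is $u$ with $Su = w$, $Tu = v$. Taking $w$ and $v$ to be the appropriately rescaled versions of $g$ and $h$, the relation $\mu_\ell g = \mu_k h$ is exactly $Sv = Tw$, and the output $u$ (rescaled once more) is the desired $f$ with $\mu_k f = g$, $\mu_\ell f = h$. The converse direction — that existence of such an $f$ forces $\mu_\ell g = \mu_k h$ — is the trivial observation $\mu_\ell \mu_k f = \mu_k \mu_\ell f$, using commutativity of mean value operators (the remark after Proposition~\ref{prop:IMV}).

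I expect the one genuine obstacle, as opposed to routine rescaling, to be establishing that the conditions of Theorem~\ref{6.1} are \emph{equivalent} to $\gcd(p_k, p_\ell) = 1$ rather than merely implied by it — in particular handling the exceptional cases ($k, \ell$ both odd; and $q = 2$ with $k, \ell$ both $\equiv 4 \bmod 6$), where one must check that a nontrivial common factor of $p_k$ and $p_\ell$ really does appear, so that the equivalence is tight. This amounts to analyzing the roots of the Geronimus/Chebyshev-type polynomials $p_n$ (which, as noted, arise from Cohen's substitution $q = 2t - 1$ and relate to $U_n$ after an affine change of variable), and tracing through exactly which cyclotomic-type factors are shared. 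Since this root analysis is essentially the content of Cohen and Picardello's proof of Theorem~\ref{6.1}, I would cite \cite{PCPompeiu} for it and simply record the coprimality reformulation as the bridge to Proposition~\ref{prop:linalg}.
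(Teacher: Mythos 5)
Your proposal is correct and follows essentially the same route as the paper: rescale to the operators $\Sigma_k,\Sigma_\ell$, invoke the Picardello--Cohen coprimality of the polynomials $p_k,p_\ell$ under the hypotheses of Theorem~\ref{6.1}, use a B\'ezout identity to show $\Sigma_\ell$ is surjective on $\ker\Sigma_k$, and conclude via Proposition~\ref{prop:linalg} (your worry about the \emph{equivalence} of the conditions with coprimality is unnecessary for this statement, since only the implication toward coprimality is used, and note that what you quote is condition (i), not (ii), of that proposition).
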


\begin{proof}
    Necessity is quickly apparent, so we show only sufficiency. Suppose that $\mu_\ell g= \mu_k h $. Then $\Sigma_\ell G = \Sigma_k H$, where $G=q^\ell g$ and $H=q^k h$. By Theorem \ref{thm:dicksonmuk} we can write $\Sigma_\ell$ and $\Sigma_k$ as polynomials $P_{\ell}(\Sigma_1)$ and $P_k(\Sigma_1)$. Picardello and Cohen showed in \cite{PCPompeiu} that $P_k$ and $P_\ell$ share no common roots. It follows that there are polynomials $Q(\Sigma_1)$ and $R(\Sigma_1)$ satisfying 

    $$\Sigma_k Q(\Sigma_1)+\Sigma_\ell R(\Sigma_1)=\Sigma_0,$$ 

    where $\Sigma_0f=f$ for all $f\in F(\mathfrak{X})$.

    Suppose that $f\in \ker{\Sigma_k}$. Then 
    
    $$f=\Sigma_0f =(\Sigma_k Q(\Sigma_1)+\Sigma_\ell R(\Sigma_1))f,$$

    which reduces to 
    
    $$\Sigma_\ell R(\Sigma_1)f=f,$$ 
    
    since $f\in \ker{\Sigma_k}$. Let $g=R(\Sigma_1)f$. Then $g\in \ker{\Sigma_k}$ and $\Sigma_\ell g=f$, so that $\Sigma_\ell:\ker{\Sigma_k} \to \ker{\Sigma_k}$ is surjective. It follows by Proposition \ref{prop:linalg} that an $f$ exists satisfying $\Sigma_k f = G$ and $\Sigma_ \ell f = H$, which is equivalent to the desired result.

\end{proof}

\newpage 

\bibliographystyle{alpha}
\bibliography{Gonzalez}
\end{document}